\def\ep{{\e\prime}}
\def\epp{{\e\prime\prime}}
\def\eppp{{\e\prime\prime\prime}}
\def\Hom{{\rm Hom}}
\def\bh{{\mathbb H}}
\def\bz{{\mathbb Z}\,}
\def\bq{{\mathbb Q}}
\def\bg{{\mathbb G}}
\def\spec{{\rm{Spec}}\,}
\def\der{{\le\rm{der}}}
\def\rad{{\e{\rm{rad}}}}
\def\be{\kern -.1em}
\def\lbe{\kern -.025em}
\def\hom{{\rm{Hom}\e}}
\def\rhom{{\rm{RHom}}\e}
\def\krn{{\rm{Ker}}\e }
\def\cok{{\rm{Coker}}}
\def\tor{{\e\rm{tor}}}
\def\Gtil{{\widetilde{G}}}
\def\id{{\rm id}}
\def\sR{{\mathscr{R}}}
\newcommand{\G}{{\mathbb{G}}}
\def\Ttil{{\widetilde{T}}}
\newcommand{\vk}{\varkappa}
\newcommand{\into}{\hookrightarrow}
\newcommand{\isoto}{\overset{\!\sim}{\to}}
\def\labelto#1{\smash{\mathop{\longrightarrow}\limits^{#1}}}
\def\s{\mathscr }
\def\ra{\rightarrow}
\def\e{\kern 0.08em}
\def\le{\kern 0.03em}
\def\ng{\kern -0.04em}
\def\krn{{\rm{Ker}}\,}
\def\cok{{\rm{Coker}}\,}
\def\C{{\mathbb{C}}}
\def\et{{\text{\'et}}}
\def\top{{\rm top}}
\newtheorem{lemma}{Lemma}[section]
\newtheorem{theorem}[lemma]{Theorem}
\newtheorem{corollary}[lemma]{Corollary}
\newtheorem{proposition}[lemma]{Proposition}
\theoremstyle{definition}
\newtheorem{definition}[lemma]{Definition}
\newtheorem{construction}[lemma]{Construction}
\theoremstyle{remark}
\newtheorem{remark}[lemma]{Remark}
\def\bM{\begin{commentM}}
\def\eM{\end{commentM}\ }
\begin{document}

\title[Algebraic fundamental group]
{The algebraic fundamental group\\  of a reductive group scheme\\
over an arbitrary base scheme}

\author{ Mikhail Borovoi}
\address{Borovoi: Raymond and Beverly Sackler School of Mathematical Sciences,
Tel Aviv University, 69978 Tel Aviv, Israel}
\email{borovoi@post.tau.ac.il}

\author{Cristian D. Gonz\'alez-Avil\'es}
\address{Gonz\'alez-Avil\'es: Departamento de Matem\'aticas, Universidad de La Serena, Cisternas 1200, La Serena 1700000, Chile}
\email{cgonzalez@userena.cl}

\subjclass[2010]{Primary 20G35}
\keywords{Reductive group scheme, algebraic fundamental group}

\begin{abstract}
We define the algebraic fundamental group $\pi_{1}(G)$ of
a reductive group scheme $G$ over an arbitrary non-empty base scheme and show that the resulting functor $G\mapsto\pi_{1}(G)$ is exact.
\end{abstract}

\maketitle

\section{Introduction}

If $G$ is a (connected) reductive algebraic group over a field $k$ of characteristic 0 and $T$ is a maximal $k$-torus of $G$,
the algebraic fundamental group $\pi_{1}(G,T)$ of the pair $(G,T)$ was defined by the first-named author \cite{bor}
and shown there to be
independent (up to a canonical isomorphism) of the choice of $T$
and useful in the study of the first Galois cohomology set of $G$.
See Definition \ref{g,t} below for a generalization of the original definition of $\pi_{1}(G,T)$.
Independently, and at about the same time, Merkurjev \cite[\S10.1]{Merkurjev} defined the algebraic fundamental group of $G$ over an arbitrary field.
Later, Colliot-Th\'el\`ene \cite[Proposition-Definition 6.1]{ct} defined the algebraic fundamental group $\pi_{1}(G)$ of $G$
in terms of a flasque resolution of $G$, showed that his definition was independent (up to a canonical isomorphism)
of the choice of the resolution, and established the existence of a canonical isomorphism $\pi_{1}(G)\simeq\pi_{1}(G,T)$,
see \cite[Proposition A.2]{ct}.
Recall that a \emph{flasque resolution of $G$} is a central extension
$$
1\to F\to H\to G\to 1
$$
where the derived group $H^{\der}$ of $H$ is simply connected, $H^{\tor}:=H/H^{\der}$
is a quasi-trivial $k$-torus, and $F$ is a \emph{flasque} $k$-torus,
i.e., the group of cocharacters of $F$ is an $H^{1}$-trivial Galois module.
It turns out that flasque resolutions of reductive group schemes exist over bases that are more general than spectra of fields,
and the second-named author has used such resolutions to generalize Colliot-Th\'el\`ene's definition
of $\pi_{1}(G)$ to reductive group schemes $G$ over any non-empty, reduced, connected,
locally Noetherian and geometrically unibranch scheme. See \cite[Definition 3.7]{GA-flasque}.

In the present paper we extend  the definition  of  \cite{GA-flasque}
to reductive group schemes $G$ over an \emph{arbitrary} non-empty scheme.
Since flasque resolutions are not available in this general setting (see \cite[Remark 2.3]{GA-flasque}),
we shall use instead $t$-resolutions, which exist over any non-empty base scheme $S$.
A \emph{$t$-resolution} of $G$ is a central extension
$$
1\to T\to H\to G\to 1,
$$
where $T$ is an $S$-torus and $H$ is a reductive $S$-group scheme such that the derived group $H^{\der}$ is simply connected.
Since a flasque resolution is a particular type of $t$-resolution,
the definition of $\pi_{1}(G)$ given here (Definition \ref{def:pi_1}) does indeed extend
the definition
of the second-named author  \cite{GA-flasque}.
Further, since the choice of a maximal $S$-torus of $G$ (when one exists) canonically determines a $t$-resolution of $G$
(see Lemma \ref{c:t-special}), our Definition \ref{def:pi_1} turns out to be a common generalization
of the definitions of \cite{bor}
and of  \cite{ct} and \cite{GA-flasque}.

Once the general definition of $\pi_{1}(G)$ is in place,
we proceed to study some of the basic properties of the resulting functor $G\mapsto\pi_{1}(G)$,
culminating in a proof of its exactness (Theorem \ref{thm:pi1-exact}).
We give, in fact, two proofs of Theorem \ref{thm:pi1-exact}, the second of which makes use of the \'etale-local existence of maximal tori
in reductive $S$-group schemes and generalizes \cite[proof of Lemma 3.7]{BKG}.
In the final section of the paper we use $t$-resolutions to relate the (flat) abelian cohomology of $G$ over $S$
introduced in \cite{ga2} to the cohomology of $S$-tori, thereby generalizing \cite[\S4]{GA-flasque}.

\begin{remark}
Let $G$ be a (connected) reductive group over the field of complex numbers $\C$.
Here we comment on the interrelation between the algebraic fundamental group $\pi_1(G)$, the topological fundamental group $\pi_1^\top(G(\C))$,
and the \'etale fundamental group $\pi_1^\et(G)$.
By \cite[Prop.~1.11]{bor} the algebraic fundamental group $\pi_1(G)$ is canonically isomorphic to the group
\[
\pi_1^\top(G(\C))(-1):= \Hom(\pi_1^\top(\C^\times), \pi_1^\top(G(\C)).
\]
It follows that  if $G$ is a  reductive $k$-group $G$ over an algebraically closed field $k$ of characteristic zero,
then the profinite completion of $\pi_1(G)$ is canonically isomorphic to the group
\[
\pi_1^\et(G)(-1):= \Hom_{\rm cont}(\pi_1^\et(\G_{m,k}), \pi_1^\et(G)).
\]
where $\Hom_{\rm cont}$ denotes the group of continuous homomorphisms and  $\G_{m,k}$ denotes the multiplicative group over $k$.
See \cite{BD} for details and for a generalization of the algebraic fundamental group $\pi_1(G)$
to arbitrary homogeneous spaces of connected linear algebraic groups.
\end{remark}

\bigskip\noindent

{\bf Notation and terminology.}
Throughout this paper, $S$ denotes a non-empty scheme. An \emph{$S$-torus} is an $S$-group scheme
which is fpqc-locally isomorphic to a group of the form $\bg_{m,S}^{n}$ for some integer $n\geq 0$ \cite[Exp.~IX, Definition 1.3]{sga3}.
An $S$-torus is affine, smooth and of finite presentation over $S$ \cite[Exp.~IX, Proposition 2.1(a), (b) and (e)]{sga3}.
An $S$-group scheme $G$ is called
\emph{reductive} (respectively, \emph{semisimple}, \emph{simply connected}) if it is affine and smooth over $S$ and its
geometric fibers are \emph{connected} reductive (respectively, semisimple,
simply connected) algebraic groups \cite[Exp.~XIX, Definition 2.7]{sga3}.
An $S$-torus is reductive, and any reductive $S$-group scheme is of finite presentation over $S$ \cite[Exp.~XIX, 2.1]{sga3}.
Now, if $G$ is a reductive group scheme over $S$,  $\rad(G)$ will denote the radical of $G$,
i.e., the identity component of the center $Z(G)$ of $G$. Further, $G^{\der}$ will denote the derived group of $G$.
Thus $G^{\der}$ is a normal semisimple subgroup scheme of $G$ and $G^{\tor}:=G/G^{\der}$ is the largest quotient of $G$ which is an $S$-torus.
We shall write $\Gtil$ for the simply connected central cover of $G^{\der}$
and $\mu:=\krn\!\big[\e\Gtil\to G^{\der}\e\big]$ for the fundamental group of $G^{\der}$.
See \cite[\S2]{GA-flasque} for the existence and basic properties of $\widetilde{G}$.
There exists a canonical homomorphism $\partial\colon \Gtil\to G$ which factors as
$\Gtil\twoheadrightarrow G^{\der}\hookrightarrow G$. In particular, $\krn\partial=\mu$ and $\cok\partial=G^{\tor}$.

If $X$ is a (commutative) finitely generated twisted constant $S$-group scheme \cite[Exp.~X, Definition 5.1]{sga3},
then $X$ is quasi-isotrivial, i.e., there exists a surjective \'etale morphism $S^{\e\prime}\ra S$ such that $X\times_{S}S^{\e\prime}$ is constant.
Further, the functors
$$
X\mapsto X^{*}:=\underline{\hom}_{\, S\text{-gr}}(X,\bg_{m,S})\quad \text{and}\quad
M\mapsto M^{*}:=\underline{\hom}_{\, S\text{-gr}}(M,\bg_{m,S})
$$
are mutually quasi-inverse anti-equivalences
between the categories of finitely generated twisted constant $S$-group schemes and $S$-group schemes of finite type
and of multiplicative type\footnote{Although \cite[Exp.~IX, Definition 1.4]{sga3} allows for groups of multiplicative type
which may not be of finite type over $S$, such groups will play no role in this paper.}
  \cite[Exp.~X, Corollary 5.9]{sga3}.
Further, $M\to M^{*}$ and $X\to X^{*}$ are exact functors (see \cite[Exp.~VIII, Theorem 3.1]{sga3} and
use faithfully flat descent). If $G$ is a reductive $S$-group scheme, its group of characters $G^{\le*}$ equals $(G^{\tor})^{\le*}$
(see \cite[Exp.~XXII, proof of Theorem 6.2.1(i)]{sga3}). Now, if $T$ is an $S$-torus, the
functor $\underline{\hom}_{\, S\text{-gr}}(\bg_{m,S},T\e)$ is
represented by a (free and finitely generated) twisted constant $S$-group
scheme which is denoted by $T_{*}$ and called the \emph{group of cocharacters of $\,T$}
(see \cite[Exp.~X, Corollary 4.5 and Theorem 5.6]{sga3}).
There exists a canonical isomorphism of free and finitely generated twisted constant $S$-group schemes
\begin{equation}\label{char-cochar}
T^{*}\simeq(T_{*})^{\vee}:=\hom_{S\lbe\text{-gr}}(T_{*},\bz_{\! S}).
\end{equation}

A sequence
\begin{equation}\label{e:exact}
0\to T\to H\to G\to 0
\end{equation}
of reductive $S$-group schemes and $S$-homomorphisms is called \emph{exact}
if it is exact as a sequence of sheaves for the fppf topology on $S$.
In this case the sequence \eqref{e:exact} will be called \emph{an extension of $G$ by $T$}.

\smallskip

If $G$ is a reductive $S$-group scheme, the identity homomorphism $G\to G$ will be denoted $\id_{G}$. Further, if $T$ is an $S$-torus,
the inversion automorphism  $T\to T$ will be denoted ${\rm{inv}}_{T}$.

\smallskip

\section{Definition of $\pi_1$}

\begin{definition}\label{def t-res}
Let $G$ be a reductive $S$-group scheme. A {\em  $t$-resolution of $G$} is a central extension
\begin{equation}\label{t-res}
1\to T\to H\to G\to 1,
\end{equation}
where $T$ is an $S$-torus and $H$ is a reductive $S$-group scheme
such that $H^{\der}$ is simply connected.
\end{definition}

\begin{proposition}\label{ex-t-resol} Every reductive $S$-group scheme admits a $t$-resolution.
\end{proposition}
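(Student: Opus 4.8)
The plan is to first exhibit $G$ as the quotient of a reductive $S$-group scheme with simply connected derived group by a \emph{finite} central subgroup scheme of multiplicative type, and then to enlarge that finite kernel to an $S$-torus by a pushout. First I would set $H_{0}:=\Gtil\times\rad(G)$, which is a reductive $S$-group scheme, and consider the $S$-homomorphism
\[
\varphi\colon H_{0}\lra G,\qquad (x,t)\longmapsto\partial(x)\e t .
\]
This is a homomorphism because $\rad(G)$ is central in $G$. Dually, the restriction map $(G^{\tor})^{*}\to\rad(G)^{*}$ is injective with finite cokernel, so $\rad(G)\to G^{\tor}$ is an isogeny and the multiplication $G^{\der}\times\rad(G)\to G$ is faithfully flat; hence $\varphi$ is an epimorphism of fppf sheaves.

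Next I would analyze the kernel $K:=\krn\varphi$. Its second projection identifies it with an extension $1\to\mu\to K\to C\to 1$, where $C:=\krn[\e\rad(G)\to G^{\tor}\e]=G^{\der}\cap\rad(G)$; consequently $K$ is a finite $S$-group scheme of multiplicative type. It is moreover central in $H_{0}$: the first projection carries $K$ into $\partial^{-1}(C)$, which---since $C\subseteq Z(G^{\der})$ and $\Gtil\onto G^{\der}$ is a central isogeny---is contained in $Z(\Gtil)$, whence $K\subseteq Z(\Gtil)\times\rad(G)=Z(H_{0})$. Finally $H_{0}^{\der}=\Gtil$, because $\rad(G)$ is a central torus and $\Gtil$ is semisimple. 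Thus $1\to K\to H_{0}\xrightarrow{\varphi}G\to 1$ is already a central extension with simply connected derived group, the sole defect being that $K$ is finite rather than a torus.

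To remedy this I would embed $K$ into an $S$-torus. Dually, I choose an epimorphism $P\onto K^{*}$ from a free, finitely generated twisted constant $S$-group scheme $P$; setting $T:=P^{*}$ and invoking the exactness of the duality functor, this yields a closed immersion $\iota\colon K=K^{**}\into P^{*}=T$ into the $S$-torus $T$. I then form the pushout $H:=(H_{0}\times T)/K$, where $K$ is embedded anti-diagonally by $k\mapsto(k,\iota(k)^{-1})$. Since $K$ is finite locally free, central and of multiplicative type, this fppf quotient is represented by a reductive $S$-group scheme $H$ \cite{sga3}. The map $\varphi$ together with $T\to 1$ induces an epimorphism $H\onto G$ whose kernel is the image of $T$; one checks directly that $T\into H$ is a closed immersion onto a central subgroup scheme, and that $\Gtil\to H_{0}\to H$ is a closed immersion with image $H^{\der}$. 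Hence $H^{\der}\cong\Gtil$ is simply connected and $1\to T\to H\to G\to 1$ is the desired $t$-resolution.

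The step most sensitive to the generality of the base---and the one I expect to be the main obstacle---is the embedding $K\into T$, equivalently the existence of an epimorphism onto $K^{*}$ from a free finitely generated twisted constant $S$-group scheme. Over an arbitrary $S$ this rests on the quasi-isotriviality of finitely generated twisted constant $S$-group schemes: after passing to a finite étale cover trivializing $K^{*}$ there is an evident epimorphism from a constant free group, which one pushes forward to an epimorphism from an induced, hence free, twisted constant group over $S$. The remaining points---faithful flatness of $G^{\der}\times\rad(G)\to G$, the finiteness and multiplicative type of $K$, and the representability of $(H_{0}\times T)/K$ by a reductive group scheme---are standard consequences of the structure theory of reductive $S$-group schemes and of the theory of groups of multiplicative type in \cite{sga3}.
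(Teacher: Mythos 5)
Your construction coincides, step for step, with the paper's own proof: both form the faithfully flat homomorphism $\rad(G)\times_S\Gtil\to G$, observe that its kernel (your $K$, the paper's $\mu_1$) is a finite central $S$-group scheme of multiplicative type, embed that kernel into an $S$-torus $T$, and take for $H$ the pushout, i.e.\ the cokernel of the anti-diagonal central embedding of the kernel into $\big(\rad(G)\times_S\Gtil\,\big)\times_S T$, checking finally that $H$ is reductive and that $\Gtil$ maps isomorphically onto $H^{\der}$, so that $H^{\der}$ is simply connected. The only place where you genuinely depart from the paper is the embedding $K\into T$: the paper obtains it by citing \cite[Proposition B.3.8]{cnrd} (a result Conrad proved precisely for this purpose, as the acknowledgements indicate, because this is the delicate point over an arbitrary base), whereas you propose to prove it directly. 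You correctly single this out as the sensitive step, but your justification of it has a real gap.

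You assert the existence of ``a finite \'etale cover trivializing $K^*$'' and attribute it to the quasi-isotriviality of finitely generated twisted constant $S$-group schemes. Quasi-isotriviality yields only a \emph{surjective \'etale} trivializing cover $f\colon S'\to S$, not a \emph{finite} one, and finiteness is essential to your induction step: for non-finite $f$ the induced sheaf $f_!\e\mathbb{Z}^n_{S'}$ is not a finitely generated twisted constant $S$-group scheme (its fibres have infinite rank), and there is no Weil restriction $R_{S'/S}(\G_{m,S'}^n)$ to serve as $T$. Moreover, over exactly the bases this paper is designed to handle, finitely generated twisted constant groups need \emph{not} admit finite trivializing covers: over a nodal curve, the twist of $\mathbb{Z}^2$ by an infinite-order matrix in ${\rm GL}_2(\mathbb{Z})$ along the infinite cyclic \'etale cover is quasi-isotrivial but not isotrivial --- this failure is the very reason flasque resolutions are unavailable there, cf.\ \cite[Remark 2.3]{GA-flasque}. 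What rescues your argument is a property you did not invoke: $K^*$ is \emph{finite}, and finite twisted constant $S$-group schemes are isotrivial over any base (after a clopen decomposition of $S$ making the fibre group constant, say $\cong A$, the finite \'etale surjective $S$-scheme $\underline{{\rm Isom}}_{S\text{-gr}}(A_S,K^*)$ trivializes $K^*$). With $f$ finite \'etale, the rest of your construction --- $P=f_!\e\mathbb{Z}^n_{S'}$, the counit epimorphism $P\onto K^*$, and $T=P^*\cong R_{S'/S}(\G^n_{m,S'})$ --- does go through, and in substance reproves \cite[Proposition B.3.8]{cnrd}. So the architecture of your proof is exactly the paper's; the one step you chose to prove rather than cite needs the isotriviality of \emph{finite} twisted constant groups, not quasi-isotriviality, to be correct as stated.
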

\begin{proof} By \cite[Exp.~XXII, 6.2.3]{sga3}, the product in $G$ defines a
faithfully flat homomorphism $\rad(G\e)\times_{S}\e G^{\der}\ra G$ which induces
a faithfully flat homomorphism $\rad(G\e)\times_{S}\widetilde{ G }\ra G $. Let $\mu_{1}=\ker[\rad(G\e)\times_{S}\widetilde{ G }\ra G\e ]$,
which is a finite $S$-group scheme of multiplicative type contained in the center of $\rad(G)\times_S \Gtil$
(see \cite[proof of Proposition 3.2, p.~9]{GA-flasque}).  By \cite[Proposition B.3.8]{cnrd},
there exist an $S$-torus $T$ and a closed immersion $\psi\colon \mu_{1}\into T$. Let
$H$ be the pushout of $\varphi\colon\mu_{1}\hookrightarrow
\rad(G)\times_{S}\widetilde{G}$ and $\psi\colon\mu_{1}\hookrightarrow T$, i.e., the cokernel of the central embedding
\begin{equation}\label{e:H0-push}
(\varphi,{\rm{inv}}_{T}\be\circ\be \psi\e)_{S}\colon\mu_{1} \into \big(\rad(G)\times_{S}\Gtil\e\big)\times_{S}  T.
\end{equation}
Then $H$ is a reductive $S$-group scheme, cf.~\cite[Exp.~XXII, Corollary 4.3.2]{sga3}, which fits into an exact sequence
\[
1\to T\to H\to G\to 1,
\]
where $T$ is central in $H$. Now, as in \cite[proof of Proposition-Definition~3.1]{ct} and \cite[proof of Proposition~3.2, p.~10]{GA-flasque},
there exists an embedding of $\Gtil$ into $H$ which identifies $\Gtil$ with $H^{\der}$.
Thus $H^{\der}$ is simply connected, which completes the proof.
\end{proof}

As in \cite[p.~93]{ct} and \cite[(3.3)]{GA-flasque}, a $t$-resolution
\[
1\to T\to H\to G\to 1\tag{$\sR\e$}
\]
induces a ``fundamental diagram''
\[
\xymatrix{ &1\ar[d] &1\ar[d] &1\ar[d] &\\
1\ar[r] &\mu\ar[r]\ar[d] &\Gtil\ar[r]\ar[d] & G^{\der}\ar[r]\ar[d] &1\\
1\ar[r] & T\ar[r]\ar[d] & H \ar[r]\ar[d] & G \ar[r]\ar[d] &1\\
1\ar[r] &M\ar[r]\ar[d] &R\ar[r]\ar[d] & G^{\tor}\ar[r]\ar[d] &1,\\
&1 &1 &1 & }
\]
where $M=T/\mu$ and $R=H^{\tor}$. This diagram induces, in turn, a canonical isomorphism in the derived category
\begin{equation}\label{q-iso}
\big(Z\big(\Gtil\e\big)\!\be\overset{\partial_{Z}}
\longrightarrow\! Z(G)\e\big)\approx(T\be\ra\be R\e)
\end{equation}
(cf. \cite[Proposition 3.4]{GA-flasque}) and a canonical exact sequence
\begin{equation}\label{fund-diag-seq}
1\to\mu\to T\to R\to G^{\tor}\to 1,
\end{equation}
where $\mu$ is the fundamental group of $G^{\der}$. Since $\mu$ is finite, \eqref{fund-diag-seq}
shows that the induced homomorphism  $T_{\be *}\ra R_{*}$ is injective.
Set
\begin{equation}\label{def: p1-resol}
\pi_1(\sR\e)=\cok\! \big[\e T_{*}\to R_{*}\e\big].
\end{equation}
Thus there exists an exact sequence of (\'etale, finitely generated) twisted constant $S$-group schemes
\begin{equation}\label{t-r-p1}
1\ra T _{\be *}\ra R _{*}\ra\pi_1(\sR\e)\ra 1.
\end{equation}
Set
\[
\mu(-1):=\hom_{S\lbe\text{-gr}}(\e\mu^{*},(\bq/\bz\be)_{S}).
\]

\begin{proposition} \label{mu-p1-gtor}  A $t$-resolution $\sR$  of a reductive $S$-group scheme $G$ induces an
exact sequence of finitely generated twisted constant $S$-group schemes
\[
1\to\mu(-1)\to\pi_1(\sR\e)\to(G ^{\tor})_{*}\to 1.
\]
\end{proposition}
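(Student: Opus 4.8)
The plan is to derive the asserted three-term sequence by splitting the four-term exact sequence \eqref{fund-diag-seq}, namely $1\to\mu\to T\to R\to G^{\tor}\to 1$, at the subgroup $M:=\img[\e T\to R\e]=T/\mu$ that already appears in the fundamental diagram, and then applying the cocharacter functor. Since $M$ is the quotient of the $S$-torus $T$ by the finite $S$-group scheme of multiplicative type $\mu$, it is again an $S$-torus; thus \eqref{fund-diag-seq} breaks up into an isogeny of $S$-tori $1\to\mu\to T\to M\to 1$ and an extension of $S$-tori $1\to M\to R\to G^{\tor}\to 1$, whose composite $T\to M\into R$ is the map $T\to R$ of \eqref{fund-diag-seq}.

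First I would use that the cocharacter functor $T^{\prime}\mapsto T^{\prime}_{*}$ is exact on short exact sequences of $S$-tori: via the duality \eqref{char-cochar} it is computed by applying $\hom(-,\bz_{\! S})$ to the (exact) character sequence, and this is exact because character groups of tori are free. Applying it to $1\to M\to R\to G^{\tor}\to 1$ gives a short exact sequence $0\to M_{*}\to R_{*}\to(G^{\tor})_{*}\to 0$, while applying it to the isogeny $1\to\mu\to T\to M\to 1$ gives, since $\hom(\bg_{m,S},\mu)=0$ and $T$, $M$ have equal rank, an injection $T_{*}\into M_{*}$ with finite cokernel. As the composite $T_{*}\to M_{*}\to R_{*}$ is the injection $T_{*}\into R_{*}$ defining $\pi_{1}(\sR)=R_{*}/T_{*}$ in \eqref{t-r-p1}, I obtain a chain of inclusions $T_{*}\subseteq M_{*}\subseteq R_{*}$, and the third isomorphism theorem yields
\[
1\to M_{*}/T_{*}\to R_{*}/T_{*}\to R_{*}/M_{*}\to 1,
\]
that is, $1\to M_{*}/T_{*}\to\pi_{1}(\sR)\to(G^{\tor})_{*}\to 1$.

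It then remains to produce a canonical isomorphism $M_{*}/T_{*}\simeq\mu(-1)$; this is the crux. Writing $C:=M_{*}/T_{*}$, a finite twisted constant $S$-group scheme, I would apply the exact functor $(-)^{*}$ to $1\to\mu\to T\to M\to 1$ to obtain $0\to M^{*}\to T^{*}\to\mu^{*}\to 0$, where by naturality of \eqref{char-cochar} the inclusion $M^{*}\into T^{*}$ is the $\bz_{\! S}$-dual of $T_{*}\into M_{*}$. Dualizing $0\to T_{*}\to M_{*}\to C\to 0$ by $\hom(-,\bz_{\! S})$ and using that $M_{*}$ is free and $C$ is finite identifies the cokernel $\cok[\e M^{*}\into T^{*}\e]$ with $\ext^{1}(C,\bz_{\! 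S})\simeq\hom(C,(\bq/\bz)_{S})$. Comparing the two descriptions of this cokernel gives $\mu^{*}\simeq\hom(C,(\bq/\bz)_{S})$, whence, by the biduality of finite twisted constant groups, $C\simeq\hom(\mu^{*},(\bq/\bz)_{S})=\mu(-1)$. Substituting this into the displayed sequence completes the argument.

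The main obstacle is precisely this last step over an arbitrary base scheme, where the relevant $\ext$-group and the Pontryagin-type biduality must be justified for finite twisted constant $S$-group schemes rather than for ordinary finite abelian groups. I expect to establish the isomorphisms $\ext^{1}(C,\bz_{\! S})\simeq\hom(C,(\bq/\bz)_{S})$ (from the coefficient sequence $0\to\bz_{\! S}\to\bq_{S}\to(\bq/\bz)_{S}\to 0$) and $C\simeq\hom(\hom(C,(\bq/\bz)_{S}),(\bq/\bz)_{S})$ étale-locally on $S$, where all sheaves become constant and one reduces to classical facts about finite abelian groups, and then to descend them by functoriality. I would also take care to confirm that $M=T/\mu$ is genuinely an $S$-torus and that the cocharacter functor is exact on $S$-tori in this generality, so that the chain $T_{*}\subseteq M_{*}\subseteq R_{*}$ and the quotient $R_{*}/M_{*}\simeq(G^{\tor})_{*}$ are valid.
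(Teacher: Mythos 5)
Your argument is correct and is essentially the paper's own: the paper proves Proposition \ref{mu-p1-gtor} by simply invoking the argument of Colliot-Th\'el\`ene \cite[Proposition 6.4]{ct} applied to the four-term sequence \eqref{fund-diag-seq}, and what you have written out --- splitting \eqref{fund-diag-seq} at $M=T/\mu$, applying the (exact) cocharacter functor to obtain $T_{*}\subseteq M_{*}\subseteq R_{*}$, and identifying $M_{*}/T_{*}$ with $\mu(-1)$ via $\ext^{1}(C,\bz_{\! S})\simeq\hom(C,(\bq/\bz)_{S})$ and Pontryagin biduality, all checked \'etale-locally --- is precisely that argument carried out in detail over a general base. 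There is no gap; your proposal is a faithful expansion of the proof the paper leaves to the cited reference.
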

\begin{proof} The proof is  similar to that of \cite[Proposition 6.4]{ct}, using \eqref{fund-diag-seq}.
\end{proof}

\begin{definition}\label{def morph-resol}
Let $G$ be a reductive $S$-group scheme and let
\begin{gather}
 1\to T^{\e\prime}\to H^{\e\prime}\to G\to 1\tag{$\sR^{\e\prime}\e$}\\
 1\to T\to H\to G\to 1\tag{$\sR\e$}
\end{gather}
be two $t$-resolutions of $G$. A \emph{morphism from $\sR^{\e\prime}$ to $\sR$}, written $\phi\colon
\sR^{\e\prime}\to \sR$, is a commutative diagram
\begin{equation}\label{e:morphism}
\xymatrix{ 1\ar[r]   &T^{\e\prime}\ar[r]\ar[d]^{\phi_{\le T}}
&H^{\e\prime}\ar[r]\ar[d]^{\phi_{H}}
&G\ar[r]\ar[d]^{\id_{G}}  &1\\
1\ar[r]   &T\ar[r]         &H\ar[r]         &G\ar[r] &1, }
\end{equation}
where $\phi_{\e T}$ and $\phi_{H}$ are $S$-homomorphisms.
Note that, if $R^{\e\prime}=(H^{\e\prime})^{\tor}$ and $R=H^{\tor}$,
then $\phi_{H}$ induces an $S$-homomorphism $\phi_{R}\colon R^{\e\prime}\to R$.

We shall say that a $t$-resolution $\sR^{\e\prime}$ of $G$ {\em dominates} another $t$-resolution $\sR$ of $G$
if there exists a morphism $\sR^{\e\prime}\to \sR$.
\end{definition}

The following lemma is well-known.

\def\Cyl{{\rm Cyl}}
\begin{lemma}\label{l:quasi}
A morphism of complexes $f\colon P\to Q$ in an abelian category is a quasi-isomorphism if and only its cone $C(f)$ is acyclic
(i.e., has trivial cohomology).
\end{lemma}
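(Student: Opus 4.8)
The plan is to deduce the statement from the long exact cohomology sequence attached to the mapping cone. Recall that $C(f)$ is the complex with $C(f)^{n}=P^{n+1}\oplus Q^{n}$ and differential sending $(p,q)$ to $(-d_{P}\,p,\,f(p)+d_{Q}\,q)$. There is a tautological short exact sequence of complexes
\[
0\to Q\to C(f)\to P[1]\to 0,
\]
where $P[1]$ denotes the shift of $P$, so that $H^{n}(P[1])=H^{n+1}(P)$; the first map is $q\mapsto(0,q)$ and the second is $(p,q)\mapsto p$. Both are morphisms of complexes, and the sequence is degreewise split, hence exact as a sequence of complexes.

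First I would write down the associated long exact sequence in cohomology,
\[
\cdots\to H^{n}(Q)\to H^{n}(C(f))\to H^{n+1}(P)\overset{\delta}{\to} H^{n+1}(Q)\to\cdots.
\]
The key point---and the only step requiring an actual computation---is to identify the connecting homomorphism $\delta$ with the induced map $H^{n+1}(f)$, up to sign. This is done by unwinding the standard description of $\delta$: a class in $H^{n+1}(P)$ is represented by a cocycle $p$, which lifts to $(p,0)\in C(f)^{n}$, whose differential is $(-d_{P}\,p,\,f(p))=(0,f(p))$; the resulting class in $H^{n+1}(Q)$ is therefore $[f(p)]=H^{n+1}(f)([p])$.

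Granting this identification, the conclusion is immediate. If $C(f)$ is acyclic, then $H^{n}(C(f))=0$ for every $n$, and the long exact sequence forces each $\delta=H^{n+1}(f)$ to be an isomorphism, so that $f$ is a quasi-isomorphism. Conversely, if $f$ is a quasi-isomorphism, then every $H^{n+1}(f)$ is an isomorphism, and exactness of the sequence places each $H^{n}(C(f))$ between the cokernel of one isomorphism and the kernel of the next, both of which vanish; hence $C(f)$ is acyclic. The main obstacle is purely a matter of bookkeeping: fixing the sign and shift conventions for the cone so that the connecting homomorphism is genuinely $\pm H^{n+1}(f)$ and not some unrelated map.
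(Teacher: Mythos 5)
Your proof is correct, and it takes a genuinely different (though closely related) route from the paper's. The paper invokes the cylinder construction: citing Gelfand--Manin, it uses the short exact sequence $0 \to P \to \mathrm{Cyl}(f) \to C(f) \to 0$ together with the fact that $\mathrm{Cyl}(f)$ is canonically isomorphic to $Q$ in the derived category, so its long exact sequence directly compares $H^i(P)$, $H^i(Q)$ and $H^i(C(f))$; the identification of the resulting map $H^i(P)\to H^i(Q)$ with $H^i(f)$ is packaged into the cited lemma (namely, $f$ factors as $P\to\mathrm{Cyl}(f)\to Q$ with the second map a homotopy equivalence). You instead use the other tautological sequence $0 \to Q \to C(f) \to P[1] \to 0$, which is degreewise split and hence exact, and you identify the connecting homomorphism with $\pm H^{n+1}(f)$ by an explicit computation; the deduction of both implications from the long exact sequence is then carried out correctly. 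What your approach buys: it is self-contained (no cylinder, no external citation), and the only real work is the sign and shift bookkeeping, which you do. What the paper's approach buys: it avoids computing any connecting homomorphism by outsourcing that step to a standard reference. One small caveat for your version: since the lemma is stated for an arbitrary abelian category, the element-theoretic identification of $\delta$ (``a cocycle $p$ lifts to $(p,0)$\dots'') should strictly be justified either diagrammatically or by passing to a small full abelian subcategory and invoking the Freyd--Mitchell embedding theorem; this is routine and does not affect the validity of the argument.
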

\begin{proof}
 By \cite[Lemma III.3.3]{GM} there exists a short exact sequence of complexes
\begin{equation}\label{e:cyl-cone}
0\to P\to \Cyl(f)\to C(f)\to 0,
\end{equation}
where $\Cyl(f)$ is the cylinder of $f$.
Further, the complex $\Cyl(f)$ is canonically isomorphic to $Q$ in the derived category.
Now the short exact sequence \eqref{e:cyl-cone} induces a cohomology exact sequence
$$
\dots\to H^i(P)\to H^i(Q)\to H^i(C(f))\to H^{i+1}(P)\to\dots
$$
from which the lemma is immediate.
\end{proof}

\begin{lemma}\label{lem:Bernstein}
Let $g\colon C\to D$ be a quasi-isomorphism of bounded complexes of split $S$-tori.
Then the induced morphism of complexes of cocharacter $S$-group schemes
$g_*\colon C_*\to D_*$ is a quasi-isomorphism.
\end{lemma}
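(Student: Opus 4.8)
The plan is to reduce to a statement about acyclic complexes and then transport cohomology across the canonical identity $T=T_*\otimes_{\Z}\bg_{m,S}$ for a split torus. First I would observe that, since the cocharacter functor $T\mapsto T_*$ is additive, it commutes with the formation of mapping cones, so $C(g_*)\cong C(g)_*$. By Lemma \ref{l:quasi}, $g$ is a quasi-isomorphism if and only if $C(g)$ is acyclic, and $g_*$ is a quasi-isomorphism if and only if $C(g)_*=C(g_*)$ is acyclic. Here the former acyclicity is taken for the fppf topology, in accordance with \eqref{e:exact}, while the latter is the vanishing of the cohomology group schemes of $E_*$, which are finitely generated and twisted constant and so may be computed in either the \'etale or the fppf topology. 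Thus it suffices to prove: if $E$ is a bounded acyclic complex of split $S$-tori, then $E_*$ is acyclic.

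Next I would use that a split $S$-torus $T$ is functorially $T_*\otimes_{\Z}\bg_{m,S}$, so that $E\cong E_*\otimes_{\Z}\bg_{m,S}$, where $E_*$ is a bounded complex of finitely generated free $\Z_{S}$-modules and hence of flat sheaves. Because $\Z$ is a principal ideal domain, the cohomology of the tensor product is governed by the universal coefficients sequences $0\to H^{i}(E_*)\otimes_{\Z}\bg_{m,S}\to H^{i}(E)\to{\rm Tor}_1^{\Z}(H^{i+1}(E_*),\bg_{m,S})\to 0$. For any finitely generated twisted constant group scheme $A$ one checks, \'etale-locally where $A$ becomes constant, that $A\otimes_{\Z}\bg_{m,S}=\bg_{m,S}^{\,r}$, where $r$ is the rank of $A$ (since the cokernel of each $n$-th power map $\bg_{m,S}\to\bg_{m,S}$ vanishes for the fppf topology), while ${\rm Tor}_1^{\Z}(A,\bg_{m,S})$ is the product of the groups $\mu_{n}$ attached to the torsion of $A$. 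As $E$ is acyclic, each such sequence forces $H^{i}(E_*)\otimes_{\Z}\bg_{m,S}=0$, so that $H^{i}(E_*)$ is torsion, and ${\rm Tor}_1^{\Z}(H^{i}(E_*),\bg_{m,S})=0$, so that $H^{i}(E_*)$ is torsion-free (using $\mu_{n}\neq 0$ for $n\geq 2$, as $S\neq\emptyset$); hence $H^{i}(E_*)=0$ for all $i$, as required. Equivalently, over the PID $\Z$ one may split $E_*$ \'etale-locally into elementary complexes $[\Z\xrightarrow{\,1\,}\Z]$, $[\Z]$ and $[\Z\xrightarrow{\,n\,}\Z]$ with $n\geq 2$, and read off the cohomology of $E$ summand by summand.

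The main obstacle is precisely this middle step. One is tempted to invoke the exactness of the character anti-equivalence $T\mapsto T^{*}$, but this fails because the kernels and images occurring in the acyclicity of $E$ need not be tori --- for instance $\krn[\e\bg_{m,S}\xrightarrow{\,n\,}\bg_{m,S}]=\mu_{n}$ --- so that functor does not apply to them. The genuine content is therefore the cohomological computation of $E_*\otimes_{\Z}\bg_{m,S}$, whose crux is the contrast between $\bg_{m,S}$ being divisible for the fppf topology (so that all the relevant cokernels vanish) and its carrying the nontrivial torsion subsheaves $\mu_{n}$. Keeping track of both simultaneously is exactly what upgrades ``$H^{i}(E_*)$ is torsion'' to $H^{i}(E_*)=0$, and hence yields the lemma.
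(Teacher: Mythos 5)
Your proof is correct, but it takes a genuinely different route from the paper's. Both arguments start with the same reduction: since $(-)_*$ is additive it commutes with mapping cones, so by Lemma \ref{l:quasi} everything comes down to showing that a bounded acyclic complex $E$ of split $S$-tori has acyclic cocharacter complex $E_*$. From there you stay on the torus side: you realize $E$ as $E_*\otimes_{\Z}\bg_{m,S}$ and combine the universal coefficient sequences with the two fppf facts that $A\otimes_{\Z}\bg_{m,S}\cong\bg_{m,S}^{\,r}$ and ${\rm Tor}_1^{\Z}(A,\bg_{m,S})\cong\prod\mu_{n_j}$ for $A$ finitely generated, so that acyclicity of $E$ forces each $H^i(E_*)$ to be simultaneously torsion and torsion-free, hence zero. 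The paper instead passes to the lattice side twice: the exact anti-equivalence $M\mapsto M^*$ turns $g$ into a quasi-isomorphism $g^*\colon D^*\to C^*$ of bounded complexes of free finitely generated constant $S$-group schemes, and then, via \eqref{char-cochar}, the lemma reduces to the purely algebraic statement (Bernstein's argument) that $X\mapsto X^\vee$ preserves quasi-isomorphisms between such complexes, which holds because an acyclic bounded complex of free finitely generated groups splits, so its $\Z$-dual is again acyclic. Your approach buys a self-contained computation inside the fppf topos; the paper's buys a reduction to elementary algebra of lattices with no sheaf-theoretic tensor/Tor bookkeeping. On that score, your closing alternative --- splitting the complex of lattices into elementary summands $[\Z\xrightarrow{\,n\,}\Z]$ and $[\Z]$ by Smith normal form and reading off the cohomology of $E$ summand by summand --- is the cleanest way to run your argument, since it sidesteps having to justify the universal coefficient theorem in the category of fppf sheaves; if you keep the UCT version, you should at least remark that the relevant short exact sequences of constant free sheaves split, so the classical diagram chase goes through verbatim for sheaves.

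One assertion in your last paragraph is wrong and worth correcting, although it concerns only your diagnosis of the alternative approach, not your own proof. You claim that invoking the exactness of the character anti-equivalence ``fails because the kernels and images occurring in the acyclicity of $E$ need not be tori.'' But $M\mapsto M^*$ is defined, and exact, on all finite type $S$-groups of multiplicative type --- in particular on $\mu_n$ --- with values in finitely generated twisted constant $S$-group schemes; this is stated in the paper's notation section (citing SGA3, Exp.~X, Cor.~5.9 and Exp.~VIII, Thm.~3.1), and it is precisely the unproblematic first step of the paper's proof. The genuine obstruction sits one step later: cocharacters are recovered as the $\Z$-dual of characters, and $X\mapsto X^\vee=\hom_{S\text{-gr}}(X,\bz_{\!S})$ is \emph{not} exact on finitely generated twisted constant groups (it kills the torsion, such as the $\Z/n$ coming from $\mu_n$); it only preserves quasi-isomorphisms between complexes of \emph{free} such groups, which is exactly what Bernstein's cone-splitting argument supplies. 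So the difficulty you correctly sense is real, but it lives in the duality $X\mapsto X^\vee$, not in $T\mapsto T^*$.
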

\begin{proof}
Since the assertion is local in the \'etale topology, we may and do assume that $S$ is connected.
The given quasi-isomorphism induces a quasi-isomorphism $g^*\colon D^*\to C^*$ of bounded complexes
of free and finitely generated constant $S$-group schemes.
Thus, by \eqref{char-cochar}, it suffices to check that the functor $X\mapsto X^{\vee}$ on the category of bounded complexes
of free and finitely generated constant $S$-group schemes preserves quasi-isomorphisms. We thank Joseph Bernstein for the following argument.
By Lemma \ref{l:quasi} a morphism $f\colon P\to Q$ of bounded complexes in the (abelian) category
of finitely generated constant $S$-group schemes
is a quasi-isomorphism if and only if its cone $C(f)$ is acyclic.
Now,  if $f\colon P\to Q$ is a quasi-isomorphism and $P$ and $Q$
are bounded complexes of \emph{free} and finitely generated constant $S$-group schemes, then $C(f)$ is an acyclic complex
of free and finitely generated constant $S$-group schemes. We see immediately that the dual complex
$$
C(f)^\vee=C\big(f^{\vee}\e\big)\big[-1\big]
$$
is acyclic, whence $f^\vee$ is a quasi-isomorphism by Lemma \ref{l:quasi}.
\end{proof}

\begin{lemma}\label{lem:induced-hom}
Let $G$ be a reductive $S$-group scheme and let $\sR^{\e\prime}$ be a $t$-resolution
of $G$ which dominates another $t$-resolution $\sR$ of $G$.
Then a morphism of $t$-resolutions $\phi\colon\sR^{\e\prime}\to\sR\e$
induces an isomorphism of finitely generated twisted constant $S$-group schemes
$\pi_{1}(\phi)\colon \pi_1(\sR^{\e\prime}\e)\isoto \pi_1(\sR\e)$ which is independent of the choice of $\phi$.
\end{lemma}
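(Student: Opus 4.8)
The plan is to exploit the fact, recorded in \eqref{q-iso}, that for any $t$-resolution the two-term complex of $S$-tori $(T\to R)$ is canonically quasi-isomorphic to the complex $\big(Z(\Gtil)\to Z(G)\big)$, which is \emph{intrinsic} to $G$, i.e.\ independent of the resolution. First I would record that $\phi$ induces a morphism of two-term complexes of $S$-tori
$$
\phi_{\bullet}=(\phi_{T},\phi_{R})\colon\big(T^{\e\prime}\to R^{\e\prime}\big)\to(T\to R),
$$
with the tori placed in degrees $0$ and $1$ and $\phi_{R}$ the homomorphism of Definition \ref{def morph-resol}; passing to cocharacters, $\phi_{\bullet}$ yields the commutative ladder relating the exact sequences \eqref{t-r-p1} for $\sR^{\e\prime}$ and $\sR$, and $\pi_{1}(\phi)$ is by definition the resulting map on cokernels. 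The crux is to show that $\phi_{\bullet}$ is a quasi-isomorphism. By the naturality of \eqref{q-iso} and the fact that $\phi$ lies over $\id_{G}$ — so that it induces the identity on $\Gtil$, $G^{\der}$, $G$ and $G^{\tor}$, hence on $\big(Z(\Gtil)\to Z(G)\big)$ — the morphism $\phi_{\bullet}$ sits in a commutative square whose two horizontal arrows are the quasi-isomorphisms \eqref{q-iso} and whose opposite vertical arrow is the identity; the two-out-of-three property then forces $\phi_{\bullet}$ to be a quasi-isomorphism. Concretely, \eqref{fund-diag-seq} identifies the cohomology of $(T\to R)$ with $\mu$ in degree $0$ and $G^{\tor}$ in degree $1$, and one checks directly that $\phi_{\bullet}$ acts as the identity on both: on $\mu$ because $\phi_{H}$ restricts to $\id_{\Gtil}$ by the universal property of the simply connected central cover, and on $G^{\tor}$ because $\phi$ lies over $\id_{G}$.

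Next I would pass to cocharacters and prove that $\pi_{1}(\phi)$ is an isomorphism. Since the property of being an isomorphism of finitely generated twisted constant $S$-group schemes may be tested after a surjective \'etale base change, I may assume $S$ connected and $T^{\e\prime},R^{\e\prime},T,R$ all split. Then Lemma \ref{lem:Bernstein} applies to the quasi-isomorphism $\phi_{\bullet}$ and shows that
$$
(\phi_{\bullet})_{*}\colon\big(T^{\e\prime}_{*}\to R^{\e\prime}_{*}\big)\to(T_{*}\to R_{*})
$$
is again a quasi-isomorphism. As the horizontal maps $T^{\e\prime}_{*}\to R^{\e\prime}_{*}$ and $T_{*}\to R_{*}$ are injective (the remark following \eqref{fund-diag-seq}), both complexes are concentrated in degree $1$, where their cohomology is $\pi_{1}(\sR^{\e\prime})$ and $\pi_{1}(\sR)$ by \eqref{t-r-p1}; hence the induced map on degree-$1$ cohomology, namely $\pi_{1}(\phi)$, is an isomorphism.

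Finally I would establish independence of $\phi$. Let $\phi,\psi\colon\sR^{\e\prime}\to\sR$ be two morphisms. Since $\phi_{H}$ and $\psi_{H}$ both lie over $\id_{G}$, the assignment $h^{\e\prime}\mapsto\phi_{H}(h^{\e\prime})\psi_{H}(h^{\e\prime})^{-1}$ takes values in $\ker[H\to G]=T$, and the centrality of $T$ in $H$ shows that it is an $S$-homomorphism $\delta\colon H^{\e\prime}\to T$. Being a homomorphism from a reductive group scheme to a torus, $\delta$ kills $(H^{\e\prime})^{\der}$ and so factors through $R^{\e\prime}=(H^{\e\prime})^{\tor}$ via some $\bar\delta\colon R^{\e\prime}\to T$. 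Projecting along $H\onto R$ then identifies the homomorphism of tori $R^{\e\prime}\to R$ given by $r^{\e\prime}\mapsto\phi_{R}(r^{\e\prime})\psi_{R}(r^{\e\prime})^{-1}$ with the composite of $\bar\delta$ and $T\to R$. Applying the additive cocharacter functor, $(\phi_{R})_{*}-(\psi_{R})_{*}$ factors through $T_{*}\to R_{*}$ and therefore vanishes in $\pi_{1}(\sR)=\cok[\e T_{*}\to R_{*}\e]$; consequently $\pi_{1}(\phi)=\pi_{1}(\psi)$. The main obstacle is the quasi-isomorphism claim of the first paragraph — verifying that $\phi_{\bullet}$ induces the identity on the intrinsic cohomology of $(T\to R)$ — after which both the isomorphism and the independence statements follow formally.
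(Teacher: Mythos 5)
Your proof is correct and takes essentially the same route as the paper's: the étale-local reduction to split tori over connected $S$, the observation via \eqref{fund-diag-seq} that $(\phi_{\e T},\phi_{\e R})\colon(T^{\e\prime}\to R^{\e\prime})\to(T\to R)$ is a quasi-isomorphism, the application of Lemma \ref{lem:Bernstein} to conclude that $\pi_{1}(\phi)=H^{1}((\phi_{\e T},\phi_{\e R})_{*})$ is an isomorphism, and the independence argument in which the discrepancy between $\phi_{H}$ and $\psi_{H}$ is a homomorphism $H^{\e\prime}\to T$ factoring through $(H^{\e\prime})^{\tor}$, so that $(\phi_{R})_{*}-(\psi_{R})_{*}$ factors through $T_{*}$ and dies in the cokernel. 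Your write-up only adds details the paper leaves implicit (the identity action on $\mu$ and $G^{\tor}$ via the universal property of $\Gtil$, and the explicit construction of the homomorphism $\delta$), so it is a faithful, slightly more detailed rendering of the same argument.
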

\begin{proof} Let  $\sR^{\e\prime}\colon 1\to T^{\e\prime}\to H^{\e\prime}\to G\to 1$ and $\sR\colon 1\to T\to H\to G\to 1$
be the given $t$-resolutions of $G$, as in Definition \ref{def morph-resol}, and set $R=H^{\tor}$ and $R^{\e\prime}=(H^{\e\prime})^{\tor}$.
Since the assertion is local in the \'etale topology,
we may and do assume that the tori $T,T^{\e\prime},R$ and $R^{\e\prime}$ are split and that $S$ is connected.
From \eqref{fund-diag-seq} we see that the morphism of complexes of split tori (in degrees 0 and 1)
$$
(\phi_{\e T},\phi_{\e R})\colon\big(T^{\e\prime}\to R^{\e\prime}\e\big)\to (T\to R\e)
$$
is a quasi-isomorphism. Now by Lemma \ref{lem:Bernstein},
\[
\pi_{1}(\phi):=H^{1}(
(\phi_{\e T},\phi_{\e R})_*)\colon \pi_1(\sR^{\e\prime}\e)\isoto \pi_1(\sR\e)
\]
is an isomorphism. In order to show that this isomorphism does not depend on the choice of $\phi$,
assume that $\psi\colon\sR^{\e\prime}\to\sR\e$ is another morphism of $t$-resolutions.
It is clear from  diagram  \eqref{e:morphism}  that $\psi_{H}$ differs from $\phi_{H}$
by some homomorphism $H^{\e\prime}\to T$ which factors through $R^{\e\prime}=(H^{\e\prime})^{\tor}$.
It follows that the induced homomorphisms $(\psi_{R})_{*},(\phi_{R})_{*}\colon R^{\e\prime}_{*}\to R_{*}$
differ by a homomorphism which factors through $T_{*}$. Consequently, the induced homomorphisms
\[
\pi_{1}(\phi), \pi_{1}(\psi)\colon \cok[T^{\e\prime}_{*}\to R^{\e\prime}_{*}]\to \cok[T_{*}\to R_{*}]
\]
coincide.
\end{proof}

\begin{proposition}\label{prop:Kottwitz}
Let $\vk\colon G_1\to G_2$ be a homomorphism of reductive $S$-group
schemes and let
\begin{gather}
 1\to T_1\to H_1\to G_1\to 1\tag{$\sR_1$}\\
 1\to T_2\to H_2\to G_2\to 1\tag{$\sR_2$}
\end{gather}
be $t$-resolutions of $G_{1}$ and $G_{2}$, respectively. Then there exists an exact commutative diagram
\begin{equation}\label{diag:Kottwitz}
\xymatrix{
1 \ar[r]  &T_1\ar[r]              &H_1\ar[r]                &G_1\ar[r]                       &1\\
1 \ar[r]  &T^{\e\prime}_1\ar[r]\ar[u]\ar[d]
&H^{\e\prime}_1\ar[r]\ar[u]\ar[d]
&G_1\ar[r]\ar[u]_{\id_{G}}\ar[d]^\vk  &1\\
1 \ar[r]  &T_2\ar[r]              &H_2\ar[r] &G_2\ar[r] &1,
}
\end{equation}
where the middle row is a $t$-resolution of $\e G_1$.
\end{proposition}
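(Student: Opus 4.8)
The plan is to take the middle $t$-resolution to be built from the fibre product $P:=H_{1}\times_{G_{2}}H_{2}$, where $H_{1}\to G_{2}$ is the composite $H_{1}\onto G_{1}\labelto{\vk}G_{2}$ and $H_{2}\to G_{2}$ is the structural projection of $\sR_{2}$. Since reductive $S$-group schemes are affine and separated, $P$ is a closed $S$-subgroup scheme of $H_{1}\times_{S}H_{2}$, equipped with the two projections $\mathrm{pr}_{1}\colon P\to H_{1}$ and $\mathrm{pr}_{2}\colon P\to H_{2}$. I would first check that the composite $P\to H_{1}\onto G_{1}$ is faithfully flat (lift $g\in G_{1}$ to $h_{1}\in H_{1}$, then lift $\vk(g)\in G_{2}$ along $H_{2}\onto G_{2}$) with kernel $T_{1}\times_{S}T_{2}$, and that this kernel is central in $P$ because $T_{i}$ is central in $H_{i}$. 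Thus
\[
1\to T_{1}\times_{S}T_{2}\to P\to G_{1}\to 1
\]
is a central extension of $G_{1}$ by the $S$-torus $T_{1}\times_{S}T_{2}$, so $P$ is a reductive $S$-group scheme by \cite[Exp.~XXII, Corollary 4.3.2]{sga3}. I would then set $H_{1}^{\e\prime}:=P$ and $T_{1}^{\e\prime}:=T_{1}\times_{S}T_{2}$.

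The two rectangles of \eqref{diag:Kottwitz} would then be filled in by $\mathrm{pr}_{1}$ and $\mathrm{pr}_{2}$: the upward maps are $\mathrm{pr}_{1}$ together with its restriction $T_{1}^{\e\prime}\to T_{1}$ (the first projection) and $\id_{G}$ on the base; the downward maps are $\mathrm{pr}_{2}$ together with its restriction $T_{1}^{\e\prime}\to T_{2}$ (the second projection) and $\vk$ on the base. Commutativity of both squares is immediate from the universal property of the fibre product, and by construction $P\to H_{2}\onto G_{2}$ agrees with $P\onto G_{1}\labelto{\vk}G_{2}$.

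It remains to verify that the middle row is a $t$-resolution, i.e.\ that $(H_{1}^{\e\prime})^{\der}=P^{\der}$ is simply connected; this is the only non-formal point. Here I would use the \emph{other} projection: $\mathrm{pr}_{1}\colon P\to H_{1}$ is faithfully flat with central kernel $\{1\}\times_{S}T_{2}\cong T_{2}$, so $1\to T_{2}\to P\to H_{1}\to 1$ is a central extension by an $S$-torus. Restricting $\mathrm{pr}_{1}$ to derived groups gives a homomorphism $P^{\der}\to H_{1}^{\der}$ which is faithfully flat with kernel $P^{\der}\cap T_{2}$; as this kernel is central in the semisimple $S$-group scheme $P^{\der}$ and contained in a torus, it is a finite $S$-group scheme of multiplicative type, so $P^{\der}\to H_{1}^{\der}$ is a central isogeny. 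Since $\sR_{1}$ is a $t$-resolution, $H_{1}^{\der}$ is simply connected; on each geometric fibre $P^{\der}\to H_{1}^{\der}$ is then a central isogeny of semisimple groups onto a simply connected group, hence an isomorphism. Its kernel therefore has trivial geometric fibres and is trivial, giving $P^{\der}\isoto H_{1}^{\der}$. Thus $(H_{1}^{\e\prime})^{\der}$ is simply connected and the middle row is a $t$-resolution, completing the construction.

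I expect the main obstacle to be precisely the simply-connectedness of $P^{\der}$; everything else is a formal consequence of the fibre-product construction. The crux is the classical fact that a central isogeny of semisimple groups whose target is simply connected must be an isomorphism, which I would apply fibrewise and then propagate to $P^{\der}\to H_{1}^{\der}$ using that a multiplicative-type $S$-group scheme with trivial geometric fibres is trivial.
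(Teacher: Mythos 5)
Your proposal is correct and follows essentially the same route as the paper: the Kottwitz fibre-product construction $H^{\e\prime}_1=H_1\times_{G_2}H_2$, with kernel $T_1\times_S T_2$ central in $H^{\e\prime}_1$, the two projections filling in the diagram, and a central-isogeny argument for simple connectedness of $(H^{\e\prime}_1)^{\der}$. The only cosmetic difference is in that last step: the paper first shows $(H^{\e\prime}_1)^{\der}\to G_1^{\der}$ is a central isogeny and then deduces the same for $(H^{\e\prime}_1)^{\der}\to H_1^{\der}=\widetilde{G}_1$, whereas you get it directly from $\mathrm{pr}_1$ with kernel contained in $T_2$; both conclude by the fact that a central isogeny onto a simply connected group scheme is an isomorphism.
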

\begin{proof} We follow an idea of Kottwitz \cite[Proof of Lemma 2.4.4]{kott}.
Let $H^{\e\prime}_1=H_1\times_{G_2} H_2$, where the morphism $H_1\to G_2$ is the composition $H_1\to G_1\overset{\vk}\to G_2$.
Clearly, there are canonical morphisms
$H^{\e\prime}_1\to H_1$ and $H^{\e\prime}_1\to H_2$. Now, since $H_2\to G_2$ is faithfully flat, so also is $H^{\e\prime}_1\to H_1$.
Consequently the composition
$H^{\e\prime}_1\to H_1\to G_1$ is faithfully flat as well. Let $T^{\e\prime}_1$ denote its kernel,
i.e., $T^{\e\prime}_1=S\times_{G_{\e 1}}\be H^{\e\prime}_1$. Then
\[
T^{\e\prime}_1=(S\times_{G_{\e 1}}\be H_1)\times_{G_2}H_2=T_1\times_S(S\times_{G_2}H_2)=T_1\times_{S}T_2,
\]
which is an $S$-torus. The existence of diagram \eqref{diag:Kottwitz} is now clear. Further, since $T_i$ is central in $H_i$ ($i=1,2$),
$T^{\e\prime}_1=T_1\times_{S}T_2$ is central in $H^{\e\prime}_1=H_1\times_{G_2} H_2$. The $S$-group scheme $H^{\e\prime}_1$ is
affine and smooth over $S$ and has connected reductive fibers, i.e.,
is a reductive $S$-group scheme. Further, the faithfully flat morphism $H^{\e\prime}_1\to G_1$
induces a surjection $(H^{\e\prime}_1)^{\der}\to G_{1}^{\der}$ with (central) kernel $T^{\e\prime}_1\cap (H^{\e\prime}_1)^{\der}$.
Since $(H^{\e\prime}_1)^{\der}$ is semisimple, the last map is in fact a central isogeny. Consequently,
$(H^{\e\prime}_1)^{\der}\to H_1^{\der}=\widetilde{G}_1$ is a central isogeny as well,
whence $(H^{\e\prime}_1)^{\der}=\widetilde{G}_1$ is simply connected.
Thus the middle row of \eqref{diag:Kottwitz} is indeed a $t$-resolution of $G_1$.
\end{proof}

\begin{corollary}\label{cor:domination}
Let $\sR_1$ and $\sR_2$ be two $t$-resolutions of a
reductive $S$-group scheme $G$. Then there exists a $t$-resolution
$\sR_{\e 3}$ of $\e G\e$ which dominates both $\sR_1$ and $\sR_2$.
\end{corollary}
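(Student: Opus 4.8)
The plan is to obtain $\sR_{\e 3}$ as a direct application of Proposition \ref{prop:Kottwitz} with both groups taken to be $G$ and with $\vk$ the identity. Concretely, I would set $G_1=G_2=G$, take $\vk=\id_{G}\colon G\to G$, and feed the two given $t$-resolutions into the proposition as $\sR_1$ (in the role of the top resolution of $G_1$) and $\sR_2$ (in the role of the bottom resolution of $G_2$). The proposition then produces a third $t$-resolution of $G$, namely the middle row of diagram \eqref{diag:Kottwitz}, which I would rename $\sR_{\e 3}$.

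Next I would read off the two required morphisms from that same diagram. The upper half of \eqref{diag:Kottwitz} is a commutative ladder from the middle row $\sR_{\e 3}$ to the top row $\sR_1$ whose rightmost vertical arrow is $\id_{G}$; by Definition \ref{def morph-resol} this is precisely a morphism $\sR_{\e 3}\to\sR_1$. Likewise the lower half is a commutative ladder from $\sR_{\e 3}$ to the bottom row $\sR_2$ whose rightmost vertical arrow is $\vk=\id_{G}$, hence a morphism $\sR_{\e 3}\to\sR_2$. Since the existence of a morphism in the sense of Definition \ref{def morph-resol} is exactly what witnesses domination, it follows that $\sR_{\e 3}$ dominates both $\sR_1$ and $\sR_2$.

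I do not anticipate any genuine obstacle, since the entire content has already been absorbed into Proposition \ref{prop:Kottwitz}; the only point requiring a moment's care is the bookkeeping of the morphism directions. One must observe that both induced maps on the quotient $G$ are the identity — this holds because the choice $\vk=\id_{G}$ forces the bottom vertical $G$-arrow to be $\id_{G}$ as well — so that both ladders qualify as morphisms of $t$-resolutions in the sense of Definition \ref{def morph-resol}, rather than merely as maps over a nontrivial homomorphism of $G$. With that verified the corollary is immediate.
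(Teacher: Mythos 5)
Your proposal is correct and is exactly the paper's own proof: the authors likewise invoke Proposition \ref{prop:Kottwitz} with $G_1=G_2=G$ and $\vk=\id_{G}$, taking the middle row of diagram \eqref{diag:Kottwitz} as $\sR_{\e 3}$ and reading off the two halves of the diagram as the dominating morphisms $\sR_{\e 3}\to\sR_1$ and $\sR_{\e 3}\to\sR_2$. Your extra care about the rightmost vertical arrows being $\id_{G}$, so that both ladders are morphisms in the sense of Definition \ref{def morph-resol}, is precisely the (implicit) bookkeeping behind the paper's one-line argument.
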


\begin{proof} This is immediate from Proposition \ref{prop:Kottwitz} (with $G_1=G_2=G$ and $\vk=\id_{G}$ there).
\end{proof}

\begin{lemma}\label{lem:can-isom}
Let $\sR_1$ and $\sR_2$ be two $t$-resolutions of a
reductive $S$-group scheme $G$. Then there exists a canonical isomorphism of finitely generated twisted constant $S$-group schemes
$\pi_{1}(\sR_1\e)\cong\pi_1(\sR_2\e)$.
\end{lemma}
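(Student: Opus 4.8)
The plan is to reduce the statement to the two facts already in hand: the existence of a common dominating $t$-resolution (Corollary \ref{cor:domination}) and the fact that a morphism of $t$-resolutions induces a \emph{canonical} isomorphism on $\pi_1$ (Lemma \ref{lem:induced-hom}). First I would invoke Corollary \ref{cor:domination} to produce a $t$-resolution $\sR_{\e 3}$ of $G$ together with morphisms $\phi_1\colon\sR_{\e 3}\to\sR_1$ and $\phi_2\colon\sR_{\e 3}\to\sR_2$. Lemma \ref{lem:induced-hom} then furnishes canonical isomorphisms $\pi_1(\phi_1)\colon\pi_1(\sR_{\e 3})\isoto\pi_1(\sR_1)$ and $\pi_1(\phi_2)\colon\pi_1(\sR_{\e 3})\isoto\pi_1(\sR_2)$, and I would define the desired isomorphism as the composite
\[
\pi_1(\phi_2)\circ\pi_1(\phi_1)^{-1}\colon\pi_1(\sR_1)\isoto\pi_1(\sR_2).
\]

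The substance of the lemma lies in the word \emph{canonical}: I must check that this composite is independent of the auxiliary resolution $\sR_{\e 3}$ and of the chosen morphisms $\phi_1,\phi_2$. Independence of the $\phi_i$ is immediate from the last clause of Lemma \ref{lem:induced-hom}. For independence of $\sR_{\e 3}$, suppose $\sR_{\e 3}^{\e\prime}$ is a second dominating resolution, equipped with morphisms $\phi_1^{\e\prime}$ and $\phi_2^{\e\prime}$. I would apply Corollary \ref{cor:domination} once more, now to the pair $\sR_{\e 3},\sR_{\e 3}^{\e\prime}$, to obtain a $t$-resolution $\sR_{\e 4}$ dominating both, say via $\theta\colon\sR_{\e 4}\to\sR_{\e 3}$ and $\theta^{\e\prime}\colon\sR_{\e 4}\to\sR_{\e 3}^{\e\prime}$. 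Since a composite of morphisms of $t$-resolutions is again such a morphism (the diagrams compose and the map on $G$ stays $\id_{G}$), both $\phi_1\circ\theta$ and $\phi_1^{\e\prime}\circ\theta^{\e\prime}$ are morphisms $\sR_{\e 4}\to\sR_1$, and hence induce the same map on $\pi_1$ by the independence clause of Lemma \ref{lem:induced-hom}; the same holds for the index $2$.

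The one ingredient not recorded explicitly in the excerpt, and the step I expect to require the most care, is the functoriality relation $\pi_1(\phi\circ\psi)=\pi_1(\phi)\circ\pi_1(\psi)$. This should follow directly from the construction $\pi_1(\phi)=H^{1}((\phi_{\e T},\phi_{\e R})_*)$ used in the proof of Lemma \ref{lem:induced-hom}, because a composite of morphisms of $t$-resolutions induces the composite map on the associated two-term complexes $(T\to R)$, and both the cocharacter functor and $H^{1}$ are functorial. Granting this, the two observations above read $\pi_1(\phi_1)\circ\pi_1(\theta)=\pi_1(\phi_1^{\e\prime})\circ\pi_1(\theta^{\e\prime})$ and $\pi_1(\phi_2)\circ\pi_1(\theta)=\pi_1(\phi_2^{\e\prime})\circ\pi_1(\theta^{\e\prime})$. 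Eliminating the invertible maps $\pi_1(\theta),\pi_1(\theta^{\e\prime})$ between these two equalities yields
\[
\pi_1(\phi_2)\circ\pi_1(\phi_1)^{-1}=\pi_1(\phi_2^{\e\prime})\circ\pi_1(\phi_1^{\e\prime})^{-1},
\]
which is precisely the required independence of $\sR_{\e 3}$, so the isomorphism $\pi_1(\sR_1)\cong\pi_1(\sR_2)$ is canonical.
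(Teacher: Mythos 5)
Your proposal is correct and follows essentially the same route as the paper: both construct the isomorphism via a common dominating resolution from Corollary \ref{cor:domination} and prove independence of that choice by dominating two candidate resolutions with a further one and invoking the uniqueness clause of Lemma \ref{lem:induced-hom}. The only cosmetic difference is that you make explicit the functoriality $\pi_1(\phi\circ\psi)=\pi_1(\phi)\circ\pi_1(\psi)$ (which indeed follows from the construction $\pi_1(\phi)=H^1((\phi_{\e T},\phi_{\e R})_*)$), a point the paper leaves implicit when it asserts that its diagram of isomorphisms commutes.
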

\begin{proof} By Corollary \ref{cor:domination}, there exists a $t$-resolution
$\sR_{\e 3}$ of $\e G\e$ and morphisms of resolutions $\sR_{\e 3}\to \sR_{\e 1}$ and $\sR_{\e 3}\to \sR_{\e 2}$.
Thus, Lemma \ref{lem:induced-hom} gives a composite isomorphism $\psi_{\sR_3}\colon\pi_1(\sR_1\e)\isoto \pi_1(\sR_3\e)\isoto
\pi_1(\sR_2\e)$. Let
$\sR_{4}$ be another $t$-resolution of $G$ which dominates both $\sR_1$ and
$\sR_2$ and let $\psi_{\sR_4}\colon\pi_1(\sR_1\e)\isoto \pi_1(\sR_4\e)\isoto
\pi_1(\sR_2\e)$ be the corresponding composite isomorphism. There exists a $t$-resolution $\sR_{5}$
which dominates both $\sR_3$ and $\sR_4$.
Then $\sR_5$ dominates $\sR_1$ and $\sR_2$ and we obtain a composite isomorphism
$\psi_{\sR_5}\colon\pi_1(\sR_1\e)\isoto \pi_1(\sR_5\e)\isoto \pi_1(\sR_2\e)$.
We have a diagram of $t$-resolutions
\[
\xymatrix{
& \sR_5\ar[dl]\ar[dr]\ar[ldd]\ar[rdd]\\
\sR_3\ar[d]\ar[drr] && \sR_4\ar[d]\ar[dll]|\hole \\
\sR_1 && \sR_2\, ,
}
\]
which may not commute. However, by Lemma \ref{lem:induced-hom}, this diagram
induces a {\em commutative} diagram of twisted constant $S$-group schemes and their isomorphisms
\[
\xymatrix{
& \pi_1(\sR_5)\ar[dl]\ar[dr]\ar[ldd]\ar[rdd]\\
\pi_1(\sR_3)\ar[d]\ar[drr] && \pi_1(\sR_4)\ar[d]\ar[dll]|\hole \\
\pi_1(\sR_1) && \pi_1(\sR_2)\, .
}
\]
We conclude that
\[
\psi_{\sR_3}=\psi_{\sR_5}=\psi_{\sR_4}\colon\pi_1(\sR_1\e)\isoto\pi_1(\sR_2\e),
\]
from which we deduce the existence of a {\em canonical} isomorphism $\psi\colon\pi_1(\sR_1\e)\isoto\pi_1(\sR_2\e)$.
\end{proof}

\begin{definition}\label{def:pi_1} Let $G$ be a reductive $S$-group scheme. Using
the preceding lemma, we shall henceforth identify the $S$-group schemes $\pi_{1}(\sR\e)$ as
$\s R$ ranges over the family of all
$t$-resolutions of $G$. Their common value will be denoted by
$\pi_1(G)$ and called the \emph{algebraic fundamental group of $G$}. Thus
\[
\pi_1(G)=\pi_1(\sR\e)
\]
for any $t$-resolution $\s R$ of $G$.
\end{definition}

Note that, by \eqref{t-r-p1}, a $t$-resolution $1\to T\to H\to G\to 1$ of $G$ induces an exact sequence
\begin{equation}\label{t-r-p1g}
1\to T_*\to (H^{\tor})_{*}\to\pi_1(G)\to 1.
\end{equation}
Further, by Proposition \ref{mu-p1-gtor}, there exists a canonical exact sequence
\begin{equation}\label{mu-p1g-gtor}
1\to\mu(-1)\to\pi_1(G)\to(G ^{\tor})_{*}\to 1.
\end{equation}

\begin{remark}\label{rem:m-res}
One can also define  $\pi_1(G)$ using $m$-resolutions.
By an {\em $m$-resolution of $G$} we mean a short exact sequence
\begin{equation}\label{e:m-res}
1\to M\to H\to G\to 1,\tag{$\sR$}
\end{equation}
where $H$ is a reductive $S$-group scheme such that $H^\der$ is simply connected,
and $M$ is an $S$-group scheme of multiplicative type.
Clearly, a $t$-resolution of $G$ is in particular an $m$-resolution of $G$.
It is very easy to see that any reductive $S$-group scheme $G$ admits an $m$-resolution:
we can take $H:=\rad(G)\times_S \Gtil$, with the homomorphism $H\to G$
from the beginning of the proof of Proposition \ref{ex-t-resol}, and set $M:=\mu_1=\krn[H\to G]$,
which is a  finite $S$-group scheme of multiplicative type.

Now let $\sR$ be an $m$-resolution of $G$ and consider the induced homomorphism $M\to H^\tor$.
We claim that there exists a complex of $S$-tori $T\to R$ which is isomorphic to $M\to H^\tor$ in the derived category.
Indeed, by \cite[Proposition B.3.8]{cnrd} there exists an embedding $M\into T$ of $M$ into an $S$-torus $T$.
Denote by $R$ the pushout of the homomorphisms $M\to H^\tor$ and $M\to T$.
Then the complex of $S$-tori $T\to R$ is quasi-isomorphic to the complex $M\to H^\tor$, as claimed.

Now we choose an $m$-resolution $\sR$ of $G$,
a complex of $S$-tori $T\to R$ which is isomorphic to $M\to H^\tor$ in the derived category,
and set $\pi_1(G)=\pi_1(\sR):=\cok[T_*\to R_*]$.
\end{remark}

\section{Functoriality and exactness of $\pi_1$}

In this section we show that $\pi_1$ is an exact covariant functor from the category of reductive $S$-group schemes
to the category of finitely generated twisted constant $S$-group schemes.

\begin{definition}\label{def:resol-of-morph}
Let $\vk\colon G_1\to G_2$ be a homomorphism of reductive $S$-group
schemes. A {\em $t$-resolution of $\vk$}, written $\vk_{\sR}\colon\sR_1\to\sR_2$, is an exact commutative diagram
\[
\xymatrix{(\sR_1)& 1\ar[r]   &T_1\ar[r]\ar[d]^{\vk_{\le T}}
&H_1\ar[r]\ar[d]^{\vk_{H}}   &G_1\ar[r]\ar[d]^\vk
 &1\\
 (\sR_2)& 1\ar[r]   &T_2\ar[r]                &H_2\ar[r] &G_2\ar[r] &1, }
\]
where $\sR_1$ and $\sR_2$ are $t$-resolutions of $G_1$ and $G_2$, respectively.

\end{definition}

Thus, if $G$ is a reductive $S$-group scheme and $\sR^{\e\prime}$ and $\sR$ are two $t$-resolutions of $G$,
then a morphism from $\sR^{\e\prime}$ to $\sR$ (as in Definition \ref{def morph-resol}) is a $t$-resolution of $\id_{G}\colon G\to G$.

\begin{remark}\label{rem:123} A $t$-resolution $\vk_{\sR}\colon\sR_1\to\sR_2$ of $\vk\colon G_1\to G_2$
induces a homomorphism of finitely generated twisted constant $S$-group schemes
\[
\pi_1(\vk_{\sR})\colon \pi_1(\sR_1)\to\pi_1(\sR_2).
\]
If $G_3$ is a third reductive $S$-group scheme, $\lambda\colon G_2\to G_3$ is an $S$-homomorphism
and $\lambda_{\sR}\colon\sR_2\to\sR_{\e 3}$ is a $t$-resolution of $\lambda$, then $\lambda_{\sR}\circ\vk_{\sR}\colon\sR_1\to\sR_{\e 3}$
is a $t$-resolution of $\lambda\circ\vk$ and
$$
\pi_1(\lambda_\sR\circ\vk_\sR)=\pi_1(\lambda_\sR)\circ\pi_1(\vk_\sR).
$$
\end{remark}

\begin{lemma}\label{lem:resol-of-morph}
Let  $\vk\colon G_1\to G_2$ be a homomorphism of reductive $S$-group
schemes and let $\sR_2$ be a $t$-resolution of $G_2$. Then there exists
a $t$-resolution $\vk_\sR\colon \sR_1\to\sR_2$ of $\vk$ for a suitable choice of $t$-resolution $\sR_1$ of $G_1$.
In particular, every homomorphism of reductive $S$-group schemes admits a $t$-resolution.
\end{lemma}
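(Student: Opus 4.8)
The plan is to obtain this as an essentially immediate consequence of Proposition \ref{prop:Kottwitz}, whose construction already yields the required diagram as soon as \emph{some} $t$-resolution of $G_1$ is supplied to feed into it. The only genuine input I still need is the mere existence of such a resolution, which is guaranteed by Proposition \ref{ex-t-resol}.

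First I would use Proposition \ref{ex-t-resol} to fix an auxiliary $t$-resolution $\sR_0$ of $G_1$; its sole purpose is to serve as input, so any choice will do. Applying Proposition \ref{prop:Kottwitz} to the homomorphism $\vk\colon G_1\to G_2$ together with the $t$-resolutions $\sR_0$ of $G_1$ and $\sR_2$ of $G_2$ then produces the exact commutative diagram \eqref{diag:Kottwitz}, whose middle row is again a $t$-resolution of $G_1$. I would take this middle row to be the sought-after $\sR_1\colon 1\to T_1\to H_1\to G_1\to 1$. By construction it comes equipped with downward $S$-homomorphisms $\vk_{\le T}\colon T_1\to T_2$ and $\vk_H\colon H_1\to H_2$ fitting into a commutative diagram
\[
\xymatrix{
1\ar[r] & T_1\ar[r]\ar[d]^{\vk_{\le T}} & H_1\ar[r]\ar[d]^{\vk_{H}} & G_1\ar[r]\ar[d]^{\vk} & 1\\
1\ar[r] & T_2\ar[r] & H_2\ar[r] & G_2\ar[r] & 1,
}
\]
in which the rightmost vertical map is exactly $\vk$. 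This lower half of \eqref{diag:Kottwitz} is precisely a $t$-resolution $\vk_\sR\colon\sR_1\to\sR_2$ of $\vk$ in the sense of Definition \ref{def:resol-of-morph}, which proves the first assertion.

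For the final ``in particular'' statement, given an arbitrary homomorphism $\vk\colon G_1\to G_2$ I would first invoke Proposition \ref{ex-t-resol} applied to $G_2$ to obtain a $t$-resolution $\sR_2$ of $G_2$, and then apply the first part of the lemma to produce a $t$-resolution of $\vk$. Since the whole argument is a bookkeeping assembly of results already established, I do not anticipate any substantive obstacle; the only point requiring care is the renaming, namely keeping straight that it is the \emph{middle} row of \eqref{diag:Kottwitz}, and not the auxiliary input $\sR_0$, that plays the role of $\sR_1$ in the conclusion.
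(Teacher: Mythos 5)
Your proposal is correct and coincides with the paper's own (very brief) proof: choose an arbitrary auxiliary $t$-resolution of $G_1$ via Proposition \ref{ex-t-resol}, feed it together with $\sR_2$ into Proposition \ref{prop:Kottwitz}, and take the middle row of diagram \eqref{diag:Kottwitz} as $\sR_1$, the lower half of the diagram being the desired $t$-resolution of $\vk$. Your added care about distinguishing the auxiliary input from the resulting $\sR_1$, and the explicit handling of the ``in particular'' clause, are just spelled-out versions of what the paper leaves implicit.
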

\begin{proof} Choose any $t$-resolution $\sR^{\e\prime}_{1}$ of $G_1$ and apply
Proposition \ref{prop:Kottwitz} to $\vk$, $\sR^{\e\prime}_{1}$ and $\sR_2$.
\end{proof}

\begin{definition}\label{morph-kappa-resol} Let $\vk\colon G_1\to G_2$ be a homomorphism of reductive $S$-group schemes
and let $\vk^{\e\prime}_\sR\colon \sR^{\e\prime}_{1}\to\sR^{\e\prime}_{2}$ and $\vk_\sR\colon \sR_1\to\sR_2$ be two $t$-resolutions of $\vk$.
A {\em morphism from $\vk^{\e\prime}_{\sR}$ to $\vk_{\sR}$}, written $\vk^{\e\prime}_{\sR}\to\vk_{\sR}$, is a commutative diagram
\[
\xymatrix{
\sR^{\e\prime}_{1}\ar[r]^{\vk^{\e\prime}_{\sR}}\ar[d]
               &\sR^{\e\prime}_{2}\ar[d]\\
\sR_{1} \ar[r]^{\vk_\sR}         &\sR_2, }
\]
where the left-hand vertical arrow is a $t$-resolution of $\id_{G_{1}}$
and the right-hand vertical arrow is a $t$-resolution of $\id_{G_{2}}$.
By a $t$-resolution {\em dominating} a $t$-resolution $\vk_{\sR}$ of $\vk$ we mean a $t$-resolution $\vk^{\e\prime}_{\sR}$ of $\vk$
admitting a morphism $\vk^{\e\prime}_{\sR}\to\vk_{\sR}$.
\end{definition}

\begin{lemma}\label{lem:dom-of-morph}
If $\vk_\sR\colon \sR_1\to\sR_2$ and $\vk^{\e\prime}_\sR\colon \sR^{\e\prime}_{1}\to\sR^{\e\prime}_{2}$ are  two $t$-resolutions
of a morphism $\vk\colon G_1\to G_2$, then there exists a third
$t$-resolution $\vk^{\e\prime\prime}_{\sR}$ of $\vk$ which dominates
both $\vk_{\sR}$ and $\vk^{\e\prime}_{\sR}$.
\end{lemma}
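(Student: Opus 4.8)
The plan is to construct $\vk^{\e\prime\prime}_{\sR}$ directly by fibre products, in the spirit of the construction underlying Proposition \ref{prop:Kottwitz}. Write the two given resolutions of $\vk$ as $\vk_{\sR}\colon\sR_1\to\sR_2$ and $\vk^{\e\prime}_{\sR}\colon\sR^{\e\prime}_1\to\sR^{\e\prime}_2$, where $\sR_i\colon 1\to T_i\to H_i\to G_i\to 1$ and $\sR^{\e\prime}_i\colon 1\to T^{\e\prime}_i\to H^{\e\prime}_i\to G_i\to 1$ for $i=1,2$, with $H$-components $\vk_H\colon H_1\to H_2$ and $\vk^{\e\prime}_H\colon H^{\e\prime}_1\to H^{\e\prime}_2$ and $T$-components $\vk_{T},\vk^{\e\prime}_{T}$, all lying over $\vk$. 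First I would set $H^{\e\prime\prime}_1:=H_1\times_{G_1}H^{\e\prime}_1$ and $H^{\e\prime\prime}_2:=H_2\times_{G_2}H^{\e\prime}_2$. By Corollary \ref{cor:domination}---whose proof, through Proposition \ref{prop:Kottwitz}, realizes a dominating resolution precisely as such a fibre product---these sit in $t$-resolutions $\sR^{\e\prime\prime}_1$ of $G_1$ and $\sR^{\e\prime\prime}_2$ of $G_2$, with torus kernels $T^{\e\prime\prime}_i=T_i\times_{S}T^{\e\prime}_i$, and the two projections give morphisms $\sR^{\e\prime\prime}_1\to\sR_1$, $\sR^{\e\prime\prime}_1\to\sR^{\e\prime}_1$ of $t$-resolutions of $\id_{G_1}$ and $\sR^{\e\prime\prime}_2\to\sR_2$, $\sR^{\e\prime\prime}_2\to\sR^{\e\prime}_2$ of $t$-resolutions of $\id_{G_2}$.

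Next I would define the horizontal map of the desired $\vk^{\e\prime\prime}_{\sR}$. The two composites $H^{\e\prime\prime}_1\to H_1\to H_2$ (second arrow $\vk_H$) and $H^{\e\prime\prime}_1\to H^{\e\prime}_1\to H^{\e\prime}_2$ (second arrow $\vk^{\e\prime}_H$) induce one and the same morphism to $G_2$, namely the composite $H^{\e\prime\prime}_1\to G_1\to G_2$ of the structure map with $\vk$; hence the universal property of $H^{\e\prime\prime}_2=H_2\times_{G_2}H^{\e\prime}_2$ furnishes a canonical $S$-homomorphism $H^{\e\prime\prime}_1\to H^{\e\prime\prime}_2$ lying over $\vk$ and restricting on kernels to $\vk_{T}\times\vk^{\e\prime}_{T}\colon T^{\e\prime\prime}_1\to T^{\e\prime\prime}_2$. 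Since its rows are the $t$-resolutions produced above, the resulting exact commutative diagram is a $t$-resolution $\vk^{\e\prime\prime}_{\sR}\colon\sR^{\e\prime\prime}_1\to\sR^{\e\prime\prime}_2$ of $\vk$.

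Finally I would check that the projections assemble into morphisms $\vk^{\e\prime\prime}_{\sR}\to\vk_{\sR}$ and $\vk^{\e\prime\prime}_{\sR}\to\vk^{\e\prime}_{\sR}$ in the sense of Definition \ref{morph-kappa-resol}. For the first, the vertical arrows are the projection morphisms $\sR^{\e\prime\prime}_1\to\sR_1$ and $\sR^{\e\prime\prime}_2\to\sR_2$ of the first paragraph, and the square they form commutes because projecting the induced map $H^{\e\prime\prime}_1\to H^{\e\prime\prime}_2$ onto $H_2$ returns $\vk_H$ precomposed with the projection $H^{\e\prime\prime}_1\to H_1$, by the very definition of the induced map; the case of $\vk^{\e\prime}_{\sR}$ is identical with the other projections. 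Since Corollary \ref{cor:domination} is already available, there is no genuinely hard step: the only input of substance is that corollary (ultimately Proposition \ref{prop:Kottwitz}), guaranteeing that the fibre products $H^{\e\prime\prime}_1$ and $H^{\e\prime\prime}_2$ are reductive $S$-group schemes with simply connected derived groups. The point I expect to require the most care is exactly the commutativity of these two squares, and the merit of the fibre-product construction is that it makes this commutativity automatic---whereas the more naive strategy of composing two separate dominations and invoking Proposition \ref{prop:Kottwitz} would leave that commutativity to be arranged by hand.
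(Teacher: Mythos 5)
Your proof is correct, and it takes a genuinely different route from the paper's. The paper argues by composing dominations: it first dominates $\sR_2$ and $\sR^{\e\prime}_2$ by a single $t$-resolution $\sR^{\e\prime\prime}_2$ (Corollary \ref{cor:domination}), then invokes Lemma \ref{lem:resol-of-morph} to produce a $t$-resolution $\widetilde{\vk}_\sR\colon\sR^{\e\prime\prime\prime}_1\to\sR^{\e\prime\prime}_2$ of $\vk$, then dominates $\sR_1$, $\sR^{\e\prime}_1$ and $\sR^{\e\prime\prime\prime}_1$ by a single $\sR^{\e\prime\prime}_1$, and finally sets $\vk^{\e\prime\prime}_\sR=\widetilde{\vk}_\sR\circ\phi$, where $\phi\colon\sR^{\e\prime\prime}_1\to\sR^{\e\prime\prime\prime}_1$ is the domination morphism. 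You instead build both sides symmetrically as fibre products, $H^{\e\prime\prime}_i=H_i\times_{G_i}H^{\e\prime}_i$, and let the universal property of $H^{\e\prime\prime}_2$ supply the horizontal map. What your route buys is exactly the point you flag at the end: the two squares demanded by Definition \ref{morph-kappa-resol} commute strictly, by the defining property of the induced map $H^{\e\prime\prime}_1\to H^{\e\prime\prime}_2$. In the paper's proof this commutativity is asserted but not checked, and with the natural choices of domination morphisms it holds in general only up to a homomorphism $H^{\e\prime\prime}_1\to T_2$ (two morphisms of $t$-resolutions lying over the same $\vk$ may differ by such a map, since $T_2$ is central); that weaker commutativity is harmless for the intended application --- independence of $\pi_1(\vk_\sR)$ from the chosen resolution --- because, by the same uniqueness argument as in Lemma \ref{lem:induced-hom}, the discrepancy disappears after applying $\pi_1$, but your construction satisfies the definition of domination literally. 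What the paper's route buys is brevity: it quotes Corollary \ref{cor:domination} and Lemma \ref{lem:resol-of-morph} as black boxes, whereas you must re-open the proof of Proposition \ref{prop:Kottwitz} (as you acknowledge) to know that each $H^{\e\prime\prime}_i$ is reductive with simply connected derived group and that the kernel of $H^{\e\prime\prime}_i\to G_i$ is the torus $T_i\times_S T^{\e\prime}_i$. The only detail worth writing out in a final version is the one-line diagram chase showing that the induced map $H^{\e\prime\prime}_1\to H^{\e\prime\prime}_2$ restricts on kernels to $\vk_{\le T}\times\vk^{\e\prime}_{\le T}$.
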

\begin{proof} By Corollary \ref{cor:domination}, there exists a $t$-resolution $\sR^\epp_2$ of $G_2$ which dominates both $\sR_2$ and $\sR^\ep_2$.
On the other hand, by Lemma \ref{lem:resol-of-morph}, there exists
a $t$-resolution $\widetilde{\vk}_\sR\colon \sR^\eppp_1\to\sR^\epp_2$ of $\vk$ for a suitable choice of $t$-resolution $\sR^\eppp_1$ of $G_1$.
Now a second application of Corollary \ref{cor:domination} yields a $t$-resolution $\sR^\epp_1$ of $G_1$
which dominates $\sR_1$, $\sR^\ep_1$ and $\sR^\eppp_1$.
Let $\phi\colon \sR^\epp_1\to \sR^\eppp_1$ be the corresponding morphism, which is a $t$-resolution of $\id_{G_{1}}$.
Then $\vk^\epp_\sR=\widetilde{\vk}_\sR\circ\phi\colon \sR^\epp_1\to\sR^\epp_2$ is a $t$-resolution of $\vk$ which
dominates both $\vk_\sR$ and $\vk^\ep_\sR$.
\end{proof}

\begin{construction}
Let  $\vk\colon G_1\to G_2$ be a homomorphism of reductive $S$-group
schemes. By Lemma \ref{lem:resol-of-morph}, there exists a
$t$-resolution $\vk_\sR\colon \sR_1\to\sR_2$ of  $\vk$, which induces a
homomorphism $\pi_1(\vk_\sR)\colon \pi_1(\sR_1)\to\pi_1(\sR_2)$ of finitely generated twisted constant $S$-group schemes.
Thus, if we identify $\pi_1(G_i)$ with $\pi_1(\sR_i)$ for $i=1,2$,
we obtain an $S$-homomorphism $\pi_1(\vk_\sR)\colon \pi_1(G_1)\to\pi_1(G_2)$ which, by Lemma \ref{lem:dom-of-morph},
can be shown to be independent of the chosen $t$-resolution $\vk_\sR$ of $\vk$. We denote it by
\[
\pi_1(\vk)\colon \pi_1(G_1)\to\pi_1(G_2).
\]
\end{construction}

\begin{lemma}\label{p1-comp}
Let $G_1\labelto{\vk} G_2\labelto{\lambda} G_3$ be homomorphisms of
reductive $S$-group schemes. Then
\[
\pi_1(\lambda\circ\vk)=\pi_1(\lambda)\circ\pi_1(\vk).
\]
\end{lemma}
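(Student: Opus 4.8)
The plan is to reduce the claim to the compatibility already recorded in Remark \ref{rem:123}, namely that $\pi_1$ respects composition of $t$-resolutions of morphisms. The only real work is to produce, for the given pair $G_1\labelto{\vk} G_2\labelto{\lambda} G_3$, a single chain of $t$-resolutions whose composite resolves $\lambda\circ\vk$; everything else is bookkeeping against the Construction preceding this lemma.

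First I would build the resolutions \emph{from the top down}. Choose any $t$-resolution $\sR_3$ of $G_3$ (one exists by Proposition \ref{ex-t-resol}). Applying Lemma \ref{lem:resol-of-morph} to $\lambda$ and $\sR_3$ yields a $t$-resolution $\sR_2$ of $G_2$ together with a $t$-resolution $\lambda_\sR\colon\sR_2\to\sR_3$ of $\lambda$. Applying Lemma \ref{lem:resol-of-morph} once more, this time to $\vk$ and the $t$-resolution $\sR_2$ just obtained, yields a $t$-resolution $\sR_1$ of $G_1$ and a $t$-resolution $\vk_\sR\colon\sR_1\to\sR_2$ of $\vk$.

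Then the composite $\lambda_\sR\circ\vk_\sR\colon\sR_1\to\sR_3$ is a $t$-resolution of $\lambda\circ\vk$, and Remark \ref{rem:123} gives $\pi_1(\lambda_\sR\circ\vk_\sR)=\pi_1(\lambda_\sR)\circ\pi_1(\vk_\sR)$. Since, by the Construction preceding this lemma, the homomorphism $\pi_1(\,\cdot\,)$ attached to a morphism is independent of the chosen $t$-resolution, under the identifications $\pi_1(G_i)=\pi_1(\sR_i)$ we may read off $\pi_1(\vk)=\pi_1(\vk_\sR)$, $\pi_1(\lambda)=\pi_1(\lambda_\sR)$ and $\pi_1(\lambda\circ\vk)=\pi_1(\lambda_\sR\circ\vk_\sR)$. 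Combining these three identities closes the argument.

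The only delicate point is the \emph{direction} in which Lemma \ref{lem:resol-of-morph} operates: it takes a resolution of the target and produces one of the source. This is what forces the resolutions to be constructed starting from $G_3$, so that the intermediate resolution $\sR_2$ of $G_2$ is genuinely shared by the resolution of $\vk$ and that of $\lambda$, making $\lambda_\sR$ and $\vk_\sR$ composable on the nose. With this ordering no further appeal to the domination machinery of Lemma \ref{lem:dom-of-morph} is required beyond the well-definedness of $\pi_1(\vk)$ and $\pi_1(\lambda)$ that it already guarantees.
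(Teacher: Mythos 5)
Your proposal is correct and is essentially identical to the paper's own proof: the paper also chooses $\sR_3$ first, applies Lemma \ref{lem:resol-of-morph} to $\lambda$ and then to $\vk$ to obtain a composable chain $\sR_1\to\sR_2\to\sR_3$, and concludes via Remark \ref{rem:123} together with the well-definedness of $\pi_1$ on morphisms. Your explicit remark about the direction in which Lemma \ref{lem:resol-of-morph} operates (target to source) makes precise the point that the paper leaves implicit, but the argument is the same.
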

\begin{proof} Choose a $t$-resolution $\sR_3$ of $G_3$. Applying Lemma
\ref{lem:resol-of-morph} first to $\lambda$ and then to
$\vk$, we obtain $t$-resolutions
$\sR_1\labelto{\vk_\sR} \sR_2\labelto{\lambda_\sR}\sR_3$ of $\vk$ and $\lambda$, and the composition $\lambda_\sR\circ\vk_\sR$
is a $t$-resolution of $\lambda\circ\vk$. Thus, by Remark \ref{rem:123},
\[
\pi_1(\lambda\circ\vk)=\pi_1(\lambda_\sR\circ\vk_\sR)=\pi_1(\lambda_\sR)\circ
\pi_1(\vk_\sR)=\pi_1(\lambda)\circ\pi_1(\vk),
\]
as claimed.
\end{proof}

Summarizing, for any non-empty scheme $S$, we have constructed a covariant functor $\pi_1$ from the category of reductive $S$-group schemes
to the category of finitely generated twisted constant $S$-group schemes.
Now assume that $S$ is admissible in the sense of \cite[Definition 2.1]{GA-flasque}
(i.e., reduced, connected, locally Noetherian and geometrically unibranch),
so that every reductive $S$-group scheme admits a flasque resolution \cite[Proposition 3.2]{GA-flasque}.
In this case the functor $\pi_{1}$ defined here in terms of $t$-resolutions 
coincides with the functor $\pi_1$ defined in \cite[Definition~3.7]{GA-flasque} in terms of flasque resolutions,
because a flasque resolution is a particular case of a $t$-resolution.
A basic example of a non-admissible scheme $S$ to which the constructions of the present paper apply, but not those of \cite{GA-flasque},
is an algebraic curve over a field having an ordinary double point. See \cite[Remark 2.3]{GA-flasque}.

\medskip

The following result generalizes \cite[Lemma 3.7]{BKG}, \cite[Proposition~6.8]{ct} and \cite[Theorem~3.14]{GA-flasque}.

\begin{theorem}\label{thm:pi1-exact}
Let $1\to G_1\to G_2\to G_3\to
1$ be an exact sequence of reductive $S$-group schemes. Then
the induced sequence of finitely generated twisted constant $S$-group schemes
\[
0\to \pi_1(G_1) \to \pi_1(G_2) \to \pi_1(G_3)\to 0
\]
is exact.
\end{theorem}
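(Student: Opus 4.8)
The plan is to construct three mutually compatible $t$-resolutions $\sR_i\colon 1\to T_i\to H_i\to G_i\to 1$ ($i=1,2,3$), organized into a commutative diagram whose rows are the $\sR_i$ and whose three columns $1\to T_1\to T_2\to T_3\to 1$, $1\to H_1\to H_2\to H_3\to 1$ and $1\to G_1\to G_2\to G_3\to 1$ are all exact, and then to deduce the theorem from the long exact cohomology sequence of the resulting short exact sequence of two-term complexes of cocharacter group schemes. Recall from \eqref{t-r-p1g} that, writing $R_i=H_i^{\tor}$, one has $\pi_1(G_i)=\cok\!\big[(T_i)_{*}\to(R_i)_{*}\big]$ with $(T_i)_{*}\to(R_i)_{*}$ injective; hence $\pi_1(G_i)$ is the $H^{1}$, and $0$ the $H^{0}$, of the complex $\big[(T_i)_{*}\to(R_i)_{*}\big]$ placed in degrees $0$ and $1$.

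To build the diagram I would first choose any $t$-resolution $\sR_3$ of $G_3$ and apply Lemma \ref{lem:resol-of-morph} to the faithfully flat homomorphism $p\colon G_2\to G_3$ and $\sR_3$. Via the construction of Proposition \ref{prop:Kottwitz} (starting from an auxiliary $t$-resolution $\sR_2^{\e\prime}\colon 1\to T_2^{\e\prime}\to H_2^{\e\prime}\to G_2\to 1$) this yields a $t$-resolution $\sR_2$ of $G_2$ and a morphism $p_{\sR}\colon\sR_2\to\sR_3$ in which $T_2=T_2^{\e\prime}\times_{S}T_3$ and $H_2=H_2^{\e\prime}\times_{G_3}H_3$, so that the vertical maps $p_T\colon T_2\to T_3$ and $p_H\colon H_2\to H_3$ are the projections. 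Since $p$ is faithfully flat, so are $p_T$ and $p_H$. Setting $T_1:=\krn p_T=T_2^{\e\prime}$ and $H_1:=\krn p_H$, the snake lemma applied to $p_{\sR}$ (or a direct verification) produces an exact sequence $1\to T_1\to H_1\to G_1\to 1$ in which $T_1$ is an $S$-torus; exhibiting $H_1$ as a central extension of the reductive group $G_1$ by $T_1$ shows that $H_1$ is a reductive $S$-group scheme. This gives the three exact rows together with the exact $T$- and $H$-columns.

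The remaining task---and the heart of the argument---is the exactness of the torus column $1\to R_1\to R_2\to R_3\to 1$. Restricting $p_H$ to derived groups yields a surjection $\widetilde{G}_2=H_2^{\der}\onto H_3^{\der}=\widetilde{G}_3$ whose kernel is $N:=\widetilde{G}_2\cap H_1$. Since $\widetilde{G}_3=\widetilde{G}_2/N$ is simply connected, the structure theory of simply connected semisimple group schemes forces $N$ to be a connected semisimple normal subgroup of $\widetilde{G}_2$ (a claim that may be verified on geometric fibers). As $N\subseteq H_1$ is connected and semisimple it lies in $H_1^{\der}$, while conversely $H_1^{\der}\subseteq\widetilde{G}_2\cap H_1=N$; hence $N=H_1^{\der}$. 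Being a connected normal subgroup of the simply connected group $\widetilde{G}_2$, this group is simply connected, so $H_1^{\der}=\widetilde{G}_1$ is the simply connected cover of $G_1^{\der}$ and $\sR_1$ is indeed a $t$-resolution; moreover $1\to\widetilde{G}_1\to\widetilde{G}_2\to\widetilde{G}_3\to 1$ is exact. The nine lemma, applied to the $3\times3$ diagram with exact columns $1\to\widetilde{G}_i\to H_i\to R_i\to 1$ and exact $\widetilde{G}$- and $H$-rows, then yields the exactness of $1\to R_1\to R_2\to R_3\to 1$.

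Finally I would pass to cocharacters. As the cocharacter functor is exact on $S$-tori (being the dual of the exact character functor), the exact columns of tori give short exact sequences $0\to(T_1)_{*}\to(T_2)_{*}\to(T_3)_{*}\to0$ and $0\to(R_1)_{*}\to(R_2)_{*}\to(R_3)_{*}\to0$, i.e.\ a short exact sequence of two-term complexes. Its cohomology sequence $0\to H^{0}_{1}\to H^{0}_{2}\to H^{0}_{3}\to H^{1}_{1}\to H^{1}_{2}\to H^{1}_{3}\to0$ reduces, via $H^{0}_{i}=0$ and $H^{1}_{i}=\pi_1(G_i)$, to the asserted exact sequence $0\to\pi_1(G_1)\to\pi_1(G_2)\to\pi_1(G_3)\to0$; all these statements about twisted constant group schemes may be checked after an étale localization splitting the relevant tori. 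The one genuinely substantial step is the identification $N=\widetilde{G}_1$ of the previous paragraph; the rest is formal, flowing from the Kottwitz construction and the exactness of the cocharacter functor.
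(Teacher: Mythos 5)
Your proof is correct, and it takes a route genuinely different from both proofs given in the paper. The paper's first proof reduces to the two special cases where $G_1$ is semisimple and where $G_1$ is an $S$-torus, following \cite[Theorem 3.14]{GA-flasque}; its second proof localizes in the \'etale topology so that the $G_i$ acquire split maximal tori $T_i$, and then computes with $\pi_1(G_i,T_i)$ via Lemma \ref{lem:mt-exact} together with the comparison maps \eqref{p1gt=p1g} and \eqref{p1gt,p1g, morph}. You instead stay entirely inside the $t$-resolution formalism: the fiber-product construction of Proposition \ref{prop:Kottwitz}, applied to $G_2\to G_3$, produces a compatible triple of $t$-resolutions with exact $T$-, $H$-, $\Gtil$- and $R$-columns, and the theorem falls out of the long exact cohomology sequence of a short exact sequence of two-term complexes of cocharacter group schemes. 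What this buys: the construction is global over $S$ (\'etale localization enters only in routine verifications about twisted constant group schemes), it needs no maximal tori, and it avoids the compatibility diagram \eqref{p1gt,p1g, morph}, which the paper verifies only for normal homomorphisms. The main caveat is that your ``one genuinely substantial step'' --- that $N=\krn\be\big[\e\Gtil_2\onto\Gtil_3\e\big]$ is connected, semisimple and simply connected, equivalently that $1\to\Gtil_1\to\Gtil_2\to\Gtil_3\to 1$ is exact --- is precisely \cite[Proposition 2.10]{GA-flasque}, the very input on which the paper's second proof rests; you should cite it rather than appeal loosely to ``structure theory'', and if you insist on arguing it fiberwise you must also record that $N$ is flat over $S$ (being the base change of the flat homomorphism $\Gtil_2\to\Gtil_3$ along the unit section of $\Gtil_3$), so that smoothness of its geometric fibers does make $N$ a semisimple $S$-group scheme rather than merely a closed subgroup scheme with good fibers. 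With that reference supplied, the remaining steps --- the snake/nine-lemma chases, the exactness of the cocharacter functor on short exact sequences of tori, and the observation that your inclusion $\sR_1\to\sR_2$ and projection $\sR_2\to\sR_3$ are $t$-resolutions of $G_1\to G_2$ and $G_2\to G_3$ in the sense of Definition \ref{def:resol-of-morph}, so that the maps in your sequence really are $\pi_1(\vk)$ and $\pi_1(\lambda)$ --- are all sound.
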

\begin{proof} The proof is similar to that of \cite[Theorem~3.14]{GA-flasque} using the exact sequence \eqref{mu-p1g-gtor}.
Namely, one first proves the theorem when $G_1$ is semisimple using the same arguments as in the proof of \cite[Lemma~3.12]{GA-flasque}
(those arguments rely on
\cite[Proposition~2.8]{GA-flasque}, which is valid over any non-empty base scheme $S$).
Secondly, one proves the theorem when $G_1$ is an $S$-torus using the same arguments
as in the proof of \cite[Lemma~3.13]{GA-flasque} (which rely on
\cite[Proposition~2.9]{GA-flasque}, which again holds over any non-empty base scheme $S$).
Finally, the theorem is obtained by combining these two particular cases as in the proof of \cite[Theorem~3.14]{GA-flasque}.
\end{proof}

We shall now present a second proof of Theorem \ref{thm:pi1-exact} which relies
on the \'etale-local existence of maximal tori in reductive $S$-group schemes.
To this end, we shall first show that if $G$ is a reductive $S$-group scheme which contains a maximal torus $T$,
then $T$ canonically determines a $t$-resolution of $G$.

\begin{lemma}\label{c:t-special}
Let $G$ be a reductive $S$-group scheme having a maximal $S$-torus $T$, and set $\Ttil:=\Gtil\times_{G}T$, it is a maximal $S$-torus of $\Gtil$.
Then there exists a $t$-resolution of $G$
\[
1\to \Ttil\to H\to G\to 1 \tag{$\sR_{\e T}\e$}
\]
such that $H^\tor$ is canonically isomorphic to $T$.
\end{lemma}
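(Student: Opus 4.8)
The plan is to specialize the pushout construction in the proof of Proposition~\ref{ex-t-resol}, taking the auxiliary $S$-torus there to be $\Ttil$ itself and using a canonical embedding of the kernel into $\Ttil$.

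To this end, recall from the proof of Proposition~\ref{ex-t-resol} the faithfully flat homomorphism $H_{0}:=\rad(G)\times_{S}\Gtil\to G$, $(z,\tilde g)\mapsto z\,\partial(\tilde g)$, whose kernel $\mu_{1}$ is a finite central $S$-subgroup scheme of multiplicative type of $H_{0}$; this is an $m$-resolution of $G$ in the sense of Remark~\ref{rem:m-res}, and $H_{0}^{\der}=\Gtil$, $H_{0}^{\tor}=\rad(G)$. I would first produce a canonical closed immersion $\mu_{1}\into\Ttil$. The second projection $\mu_{1}\to\Gtil$ is a monomorphism, since on $\mu_{1}$ the first coordinate is $z=\partial(\tilde g)^{-1}$ and is thus determined by $\tilde g$; being a monomorphism out of a finite $S$-group scheme, it is a closed immersion. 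Moreover, for a point $(z,\tilde g)$ of $\mu_{1}$ one has $\partial(\tilde g)=z^{-1}\in\rad(G)\cap G^{\der}\subseteq Z(G^{\der})$, whence $\tilde g\in\partial^{-1}(Z(G^{\der}))=Z(\Gtil)\subseteq\Ttil$. Thus $\mu_{1}$ is canonically a closed $S$-subgroup scheme of $\Ttil$.

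Next, exactly as in the proof of Proposition~\ref{ex-t-resol}, I would form the pushout $H$ of the closed immersions $\mu_{1}\into H_{0}$ and $\mu_{1}\into\Ttil$, that is, the cokernel of the central embedding $\mu_{1}\into H_{0}\times_{S}\Ttil$. By \cite[Exp.~XXII, Corollary~4.3.2]{sga3}, $H$ is a reductive $S$-group scheme, and the argument of Proposition~\ref{ex-t-resol} provides an embedding of $\Gtil$ into $H$ identifying $\Gtil$ with $H^{\der}$, so that $H^{\der}$ is simply connected. By the defining property of the pushout, $H$ sits in a central extension $1\to\Ttil\to H\to G\to 1$, which is therefore the sought-after $t$-resolution $\sR_{\e T}$ of $G$.

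The remaining, and I expect principal, point is to identify $H^{\tor}$ canonically with $T$. Passing to torus quotients in the pushout yields $H^{\tor}=\big(\rad(G)\times_{S}\Ttil\big)/\mu_{1}$. I would then verify that the homomorphism $\rad(G)\times_{S}\Ttil\to T$, $(z,\tilde t)\mapsto z\,\partial(\tilde t)^{-1}$, is faithfully flat with kernel exactly the image of $\mu_{1}$. Surjectivity follows from the equality $T=\rad(G)\cdot T^{\der}$, where $T^{\der}:=T\cap G^{\der}=\partial(\Ttil)$, and the identification of the kernel with the image of $\mu_{1}$ uses $\rad(G)\cap T^{\der}=\rad(G)\cap G^{\der}$ (valid since $\rad(G)\subseteq T$). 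This produces the desired canonical isomorphism $H^{\tor}\isoto T$ and completes the construction; the only care needed is to track the sign conventions in the pushout so that this last map is indeed a homomorphism annihilating $\mu_{1}$.
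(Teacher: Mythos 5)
Your proof is correct and takes essentially the same route as the paper's: the same pushout of $\mu_{1}\hookrightarrow\rad(G)\times_{S}\Gtil$ along the canonical embedding $\mu_{1}\hookrightarrow Z(\Gtil\,)\subseteq\Ttil$, followed by the identification $H^{\tor}\cong\big(\rad(G)\times_{S}\Ttil\e\big)/\mu_{1}\cong T$ resting on the equality $T=\rad(G)\cdot(T\cap G^{\der})$. The only cosmetic difference is that you exhibit the final isomorphism via the explicit homomorphism $(z,\tilde t)\mapsto z\,\partial(\tilde t)^{-1}$ (with the correct sign for the anti-diagonal pushout), whereas the paper describes it as the embedding $H^{\tor}\hookrightarrow G$ induced by $\Ttil\hookrightarrow\Gtil$, citing SGA3 for the image computation.
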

\begin{proof} By \cite[proof of Proposition 3.2]{GA-flasque}, the product in $G$ and the canonical epimorphism $\Gtil\to G^{\der}$
induce a faithfully flat homomorphism
$\rad(G\e)\times_{S}\e \Gtil\ra G$ whose (central) kernel $\mu_{1}$ embeds into $Z(\Gtil\,)$
via the canonical projection $\rad(G\e)\times_{S}\e \Gtil\to\Gtil$.
In particular, we have a central extension
\begin{equation}\label{e:G-push}
1\to\mu_{1}\overset{\varphi}\to\rad(G\e)\times_{S}\Gtil\to G \to 1.
\end{equation}
Since $Z(\Gtil\,)\subset\Ttil$ by \cite[Exp.~XXII, Corollary 4.1.7]{sga3}, we obtain an embedding $\psi\colon\mu_{1}\into\Ttil$.
Let $H$ be the pushout of
$\varphi\colon\mu_{1}\into
\rad(G)\times_{S}\Gtil$ and $\psi\colon\mu_{1}\into\Ttil$, i.e., the cokernel of the central embedding
\begin{equation}\label{e:H-push}
(\varphi,{\rm{inv}}_{\Ttil}\be\circ\be\psi\e)_{S}\colon\mu_{1} \into \big(\rad(G)\times_{S}\Gtil\e\big)\times_{S}  \Ttil.
\end{equation}
Now let $\varepsilon\colon S\to\rad(G)\times_{S}\Gtil$ be the
unit section of $\rad(G)\times_{S}\Gtil$ and set
$$
j=(\varepsilon,
{\rm{id}}_{\Ttil})_{S}\colon S\times_{S}\Ttil\to \big(\rad(G)\times_{S}\Gtil\e\big)\times_{S} \Ttil.
$$
Composing $j$ with the canonical isomorphism $\Ttil\simeq S\times_{S}\Ttil$, we obtain an $S$-morphism
$\Ttil\to \rad(G)\times_{S}\Gtil\times_{S}\e\Ttil$ which induces an embedding $\iota_{T}\colon\Ttil\into H$.
Further, let $\pi_{\e T}\colon H\to G$ be the homomorphism which is induced by the projection
\[
\rad(G)\times_{S}\Gtil\times_{S}  \Ttil \to \rad(G)\times_{S}  \widetilde{G}.
\]
Then we obtain a $t$-resolution of $G$
\[
1\longrightarrow \Ttil\overset{\iota_{_T}}\longrightarrow H\overset{\pi_{_T}}\longrightarrow G\longrightarrow 1
\]
which is canonically determined by $T$ (cf. the proof of Proposition \ref{ex-t-resol}).
It remains to show that $H^{\tor}$ is canonically isomorphic to $T$.
Let $\varepsilon_\rad\colon S\to \rad(G)$ and $\varepsilon_\Ttil\colon S\to\Ttil$ be the unit sections of $\rad(G)$ and $\Ttil$,
respectively, and consider the homomorphism
\[
(\varepsilon_\rad,{\rm{id}}_{\Gtil}\e,\varepsilon_\Ttil)_{S}\colon S\times_{S}
\Gtil\times_{S}S\to\rad(G)\times_{S}\Gtil\times_{S}  \Ttil.
\]
Composing this homomorphism with the canonical isomorphism $\Gtil\simeq S\times_{S}
\Gtil\times_{S}S$, we obtain a canonical embedding $\Gtil\into \rad(G)\times_{S}\Gtil\times_{S}\Ttil$.
The latter map induces a homomorphism $\Gtil\to H$ which identifies $\Gtil$ with $H^{\der}$.
Now consider the composite homomorphism
\[
\varphi_\rad\colon\mu_{1}\labelto{\varphi}\rad(G)\times_{S}\Gtil\overset{\rm pr_{_{\be 1}}}\longrightarrow\rad(G).
\]
Then $H^{\tor}:=H/H^\der=H/\Gtil$ is isomorphic to the cokernel of the central embedding
\begin{equation}\label{e:Ht-push}
(\varphi_{\rad},{\rm{inv}}_{\Ttil}\circ \psi\e)_{S}\colon\mu_{1}\into\rad(G)\times_{S}\Ttil.
\end{equation}
Compare \eqref{e:H-push}. Finally, the canonical embedding $\Ttil\into\Gtil$
induces an embedding $H^\tor\into G$ (see \eqref{e:G-push} and \eqref{e:Ht-push})
whose image is $\rad(G)\cdot (T\cap G^\der)=T$ \cite[Exp.~XXII, proof of Proposition 6.2.8(i)]{sga3}. This completes the proof.
\end{proof}

\begin{remark}
It is clear from the above proof that the homomorphism
$\Ttil\to H^{\tor}=T$ induced by the $t$-resolution $\sR_{\e T}\e$ of Lemma \ref{c:t-special}
is the canonical homomorphism $\partial\colon\Ttil\to T$.
\end{remark}

\begin{definition}\label{g,t}
Let $G$ be a reductive $S$-group scheme containing a maximal $S$-torus $T$.
The \emph{algebraic fundamental group of the pair $(G,T)$} is the $S$-group scheme $\pi_{1}(G,T):=\cok[\partial_{*}\colon \Ttil_{*}\to T_{*}]$.
\end{definition}

\def\ve{{\varepsilon}}
By Lemma \ref{c:t-special} and Definition \ref{def:pi_1}  we have a canonical isomorphism
\begin{equation}\label{p1gt=p1g}
\vartheta_T\colon \pi_1(G,T)\isoto\pi_1(\sR_T)=\pi_1(G).
\end{equation}
Further, any morphism of pairs $\vk\colon(G_1,T_1)\to (G_2,T_2)$ (in the obvious sense)
induces an $S$-homomorphism $\vk_{*}\colon\pi_1(G_1,T_1)\to \pi_1(G_2,T_2)$.
It can be shown that the following diagram commutes:
\begin{equation}\label{p1gt,p1g, morph}
\xymatrix{
\pi_1(G_1,T_1)\ar[r]^{\vk_*}\ar[d]_{\vartheta_{T_1}} &\pi_1(G_2,T_2)\ar[d]^{\vartheta_{T_2}}\\
\pi_1(G_1)\ar[r]^{\vk_*} &\pi_1(G_2)\, .
}
\end{equation}
This is immediate in the case where $\vk$ is a \emph{normal} homomorphism, i.e. $\vk(G_1)$ is normal in $G_2$
(this is the only case needed in this paper).
Indeed, in this case we have $\vk(\rad(G_1))\subset\rad(G_2)$ and therefore $\vk$ induces a morphism of $t$-resolutions
$\vk_\sR\colon \sR_{\e T_1}\to\sR_{\e T_2}$. See the proof of Lemma  \ref{c:t-special}.

\begin{remark} The preceding considerations and Lemma \ref{lem:can-isom} show that,
if $S$ is an admissible scheme in the sense of \cite[Definition 2.1]{GA-flasque},
so that every reductive $S$-group scheme $G$ admits a flasque resolution $\s F$,
and $G$ contains a maximal $S$-torus $T$, then there exists a canonical isomorphism
$\pi_{1}(\s F\e)\cong\cok[\partial_{*}\colon \Ttil_{*}\to T_{*}]$.
This fact generalizes \cite[Proposition A.2]{ct}, which is the case $S=\spec k$, where $k$ is a field, of the present remark.
\end{remark}

\begin{lemma}\label{lem:mt-exact}
Let
\[
1\to (G_1,T_1)\labelto{\vk} (G_2,T_2)\labelto{\lambda} (G_3,T_3)\to 1
\]
be an exact sequence of reductive $S$-group schemes with maximal tori.
Then the sequence of \'etale, finitely generated twisted constant $S$-group schemes
\[
0\to\pi_1(G_1,T_1) \labelto{\vk_*}\pi_1(G_2, T_2)  \labelto{\lambda_*}\pi_1(G_3,T_3) \to 0
\]
is exact.
\end{lemma}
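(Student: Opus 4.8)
The plan is to realize each $\pi_1(G_i,T_i)$ as the first cohomology of a two-term complex of cocharacter lattices and then to read off the assertion from the long exact cohomology sequence of a short exact sequence of such complexes. By Definition \ref{g,t}, $\pi_1(G_i,T_i)=\cok[\e\partial_{i*}\colon\Ttil_{i*}\to T_{i*}]$, where $\Ttil_i:=\Gtil_i\times_{G_i}T_i$ and $\partial_i\colon\Gtil_i\to G_i$ is the canonical homomorphism. Since the kernel $\mu_i$ of $\partial_i$ is finite, the argument following \eqref{fund-diag-seq} shows that $\partial_{i*}$ is injective. Hence, regarding $C^{(i)}:=\big(\Ttil_{i*}\overset{\partial_{i*}}\to T_{i*}\big)$ as a complex concentrated in degrees $0$ and $1$, we have $H^{0}(C^{(i)})=0$ and $H^{1}(C^{(i)})=\pi_1(G_i,T_i)$. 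It therefore suffices to produce a short exact sequence of complexes $0\to C^{(1)}\to C^{(2)}\to C^{(3)}\to 0$ whose induced maps on $H^{1}$ are $\vk_*$ and $\lambda_*$.

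To build this short exact sequence I would first record two short exact sequences of $S$-tori. The bottom one, $1\to T_1\to T_2\to T_3\to 1$, is part of the hypothesis that $1\to (G_1,T_1)\to(G_2,T_2)\to(G_3,T_3)\to 1$ is an exact sequence of reductive $S$-group schemes with maximal tori (the tori being compatible, i.e. $T_1=\vk^{-1}(T_2)$ and $T_3=\lambda(T_2)$). For the top one, functoriality of the simply connected central cover applied to $1\to G_1\to G_2\to G_3\to 1$ yields homomorphisms $\Gtil_1\to\Gtil_2\to\Gtil_3$; forming $\Ttil_i=\Gtil_i\times_{G_i}T_i$ then gives a commutative ladder over $1\to T_1\to T_2\to T_3\to 1$, and I claim the resulting sequence of maximal tori $1\to\Ttil_1\to\Ttil_2\to\Ttil_3\to1$ is exact. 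Granting this, I apply the cocharacter functor $(-)_*$, which is exact on $S$-tori by \eqref{char-cochar} and the exactness of the character functor (dualizing a short exact sequence of free lattices preserves exactness), to both rows. This produces a commutative diagram of short exact sequences of cocharacter lattices compatible with the vertical maps $\partial_{i*}$, that is, precisely the desired short exact sequence of complexes $0\to C^{(1)}\to C^{(2)}\to C^{(3)}\to0$.

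With the short exact sequence of complexes in hand, the associated long exact cohomology sequence, together with $H^{0}(C^{(i)})=0$ and $H^{1}(C^{(i)})=\pi_1(G_i,T_i)$, collapses to
\[
0\to\pi_1(G_1,T_1)\labelto{\vk_*}\pi_1(G_2,T_2)\labelto{\lambda_*}\pi_1(G_3,T_3)\to0,
\]
the maps being induced on cokernels by the chain maps $C^{(1)}\to C^{(2)}\to C^{(3)}$ and hence equal to $\vk_*$ and $\lambda_*$. This is the assertion. I expect the only substantive point to be the exactness of the top row $1\to\Ttil_1\to\Ttil_2\to\Ttil_3\to1$, equivalently that the coroot lattices $\Ttil_{i*}$ form a short exact sequence. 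Since exactness of a sequence of $S$-tori may be checked on cocharacter lattices after \'etale localization, and hence on geometric fibers, this reduces to the corresponding statement over an algebraically closed field: for an exact sequence of reductive groups with compatible maximal tori, the coroot lattices of $G_1$, $G_2$, $G_3$ fit into a short exact sequence, reflecting the decomposition of the root datum of $G_2$ into the sub-root-datum cut out by $G_1$ and the quotient root-datum giving $G_3$.
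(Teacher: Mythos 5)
Your argument is correct in substance and is essentially the paper's proof in different packaging: both come down to the commutative ladder of maximal tori $\Ttil_i=\Gtil_i\times_{G_i}T_i$ sitting over $1\to T_1\to T_2\to T_3\to 1$, pass to cocharacter lattices, and finish with elementary homological algebra (your long exact cohomology sequence for the two-term complexes $\Ttil_{i*}\to T_{i*}$ is exactly the paper's application of the snake lemma, and the paper likewise works \'etale-locally with split tori). The only real divergence is how the crucial input --- exactness of $1\to\Ttil_1\to\Ttil_2\to\Ttil_3\to 1$ --- is secured, and, as you yourself observe, this is the entire content of the lemma. The paper disposes of it by citing \cite[Proposition~2.10]{GA-flasque}, which provides an exact commutative diagram whose top row $1\to\Gtil_1\to\Gtil_2\to\Gtil_3\to 1$ is again exact; exactness of the $\Ttil$-row then follows by pulling back along the $T_i$. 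You instead reduce to geometric fibers and then simply assert the fiberwise statement about coroot lattices, which is precisely the step that needs proof; note also that even writing down the maps $\Gtil_1\to\Gtil_2\to\Gtil_3$ over a general base rests on a nontrivial lifting property of homomorphisms through central isogenies from simply connected groups (in the paper's references this is \cite[Proposition~2.8]{GA-flasque}), not on formal ``functoriality''. Your asserted fact is true, and the reduction to it is legitimate, so this is a citable rather than fatal omission: either invoke \cite[Proposition~2.10]{GA-flasque} as the paper does, or complete your fiberwise argument by observing that, since $G_1$ is a \emph{normal} connected reductive subgroup of $G_2$, its root system is a union of irreducible components of the root system of $G_2$; hence fiberwise $\Gtil_2\cong\Gtil_1\times\Gtil_3$ and the coroot lattice of $G_2$ is the direct sum of those of $G_1$ and $G_3$, which yields the (even split) short exact sequence of lattices you need.
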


\begin{proof} The assertion of the lemma is local for the \'etale topology, so
we may and do assume that $T_1$, $T_2$, and $T_3$ are split. By \cite[Proposition~2.10]{GA-flasque},
there exists an exact commutative diagram of reductive $S$-group schemes
\[
\xymatrix{ 1 \ar[r]  &\Gtil_1\ar[r]\ar[d]^{\partial_{\e 1}}
 &\Gtil_2\ar[r]\ar[d]^{\partial_{\e 2}}  &\Gtil_3\ar[r]\ar[d]^{\partial_{\e 3}} &1\\
1 \ar[r]  &G_1\ar[r]            &G_2\ar[r]            &G_3\ar[r] &1 ,
}
\]
which induces an exact commutative diagram of split $S$-tori
\begin{equation}\label{e:d-T}
\xymatrix{
1 \ar[r]  &\Ttil_1\ar[r]\ar[d]^{\partial_{\e 1}}  &\Ttil_2\ar[r]\ar[d]^{\partial_{\e 2}}  &\Ttil_3\ar[r]\ar[d]^{\partial_{\e 3}} &1\\
1 \ar[r]  &T_1\ar[r]            &T_2\ar[r]            &T_3\ar[r] &1,
}
\end{equation}
where $\Ttil_i:=\Gtil_i\times_{G_i} T_i$ $(i=1,2,3)$.
Now, as in \cite[Proof of Lemma 3.7]{BKG},  diagram \eqref{e:d-T} induces an
exact commutative diagram of constant $S$-group schemes
\begin{equation*}
\xymatrix{
1 \ar[r]  &\Ttil_{1*_{\phantom{}}}\ar[r]\ar@{^{(}->}[d]^{\partial_{\e 1*}}
       &\Ttil_{2*_{\phantom{}}}\ar[r]\ar@{^{(}->}[d]^{\partial_{\e 2*}}  &\Ttil_{3*_{\phantom{}}}\ar[r]\ar@{^{(}->}[d]^{\partial_{\e 3*}} &1\\
1 \ar[r]  &T_{1*}\ar[r]            &T_{2*}\ar[r] &T_{3*}\ar[r] &1 }
\end{equation*}
with injective vertical arrows. An application of the snake lemma to
the last diagram now yields the exact sequence
\[
0\to\cok \,\partial_{\e 1*} \to\cok \,\partial_{\e 2*} \to
\cok\,\partial_{\e 3*} \to 0,
\]
which is the assertion of the lemma.
\end{proof}

\begin{proof}[Second proof of Theorem \ref{thm:pi1-exact}]
Let $1\to G_1\to G_2\to G_3\to 1$ be an exact sequence of reductive
$S$-group schemes. By \cite[Exp.~XIX, Proposition 6.1]{sga3}, for any reductive $S$-group scheme $G$
there exists an \'etale covering $\{S_{\alpha}\to S\}_{\alpha\in A}$
such that each $G_{\be S_\alpha}:=G\times_S S_\alpha$ contains a split
maximal $S_\alpha$-torus $T_\alpha$. Thus, since the assertion of the theorem is local for the \'etale topology,
we may and do assume that $G_2$ contains a split maximal $S$-torus $T_2$. Let $T_1=G_{1}\times_{G_2}T_{2}$ and
let $T_3$ be the cokernel of $T_1\to T_2$. Then $T_i$ is a split maximal $S$-torus of $G_i$ for $i=1,2,3$
and we have an exact sequence of pairs
\[
1\to (G_1, T_1) \to (G_2, T_2) \to (G_3, T_3) \to 1.
\]
Now the theorem follows from Lemma \ref{lem:mt-exact}, \eqref{p1gt=p1g} and \eqref{p1gt,p1g, morph}.
\end{proof}

\section{Abelian cohomology and $t$-resolutions}

Let $S_{\e\rm{fl}}$ (respectively, $S_{\e\rm{\acute{e}t}}$) be the small fppf
(respectively, \'etale) site over $S$. If $F_{1}$ and $F_{2}$ are
abelian sheaves on $S_{\e\rm{fl}}$ (regarded as complexes
concentrated in degree 0), $F_{1}\lbe\otimes^{\e\mathbf{L}}\lbe
F_{2}$ (respectively, $\rhom\e(F_{1},F_{2})$) will denote the total
tensor product (respectively, right derived Hom functor) of $F_{1}$
and $F_{2}$ in the derived category of the category of abelian
sheaves on $S_{\e\rm{fl}}$.

Let $G$ be a reductive group scheme over $S$. For any integer $i\geq -1$, the $i$-th
\textit{abelian (flat) cohomology group of $G$} is by definition the
hypercohomology group
\[
H^{\le i}_{\rm{ab}}(S_{\e\rm{fl}}, G\e)={\bh}^{\e
i}\big(S_{\e\rm{fl}},
Z\big(\Gtil\,\big)\overset{\partial_{Z}}\longrightarrow Z(G\e)).
\]
On the other hand, the $i$-th \textit{dual abelian cohomology group of $\e G$} is the group
\[
H^{\le i}_{\rm{ab}}(S_{\e\rm{\acute{e}t}},G^{*})={\bh}^{\e
i}\big(S_{\e\rm{\acute{e}t}},Z(G\e)^{*}\!\overset{\partial_{\lbe
Z}^{*}}\longrightarrow Z\big(\Gtil\,\big)^{\lbe *}\e\big).
\]
Here all the complexes of length 2 are in degrees $(-1,0)$.
See \cite[beginning of \S4]{GA-flasque} for basic properties of these cohomology groups and
\cite{bor,ga2,ga3} for (some of) their arithmetical applications.

The following result is an immediate consequence of \eqref{q-iso}.

\begin{proposition}\label{prop-cohom}
Let $G$ be a reductive $S$-group scheme and let $1\ra T\ra H\ra G\ra 1$ be a $t$-resolution of $G$. Then the given $t$-resolution
defines isomorphisms $H^{\le i}_{\rm{ab}}(S_{\e\rm{fl}}, G\e)\simeq
{\bh}^{\e i}\big(S_{\e\rm{fl}},T\ra R\e)$ and $H^{\le
i}_{\rm{ab}}(S_{\e\rm{\acute{e}t}}, G^{*}\e)\simeq {\bh}^{\e
i}\big(S_{\e\rm{\acute{e}t}},R^{*}\ra T^{*})$, where $R=H^{\lbe\tor}$. Further, there exist exact sequences
$$
\dots\ra H^{\le i}(S_{\e\rm{\acute{e}t}},T\e) \ra H^{\le
i}(S_{\e\rm{\acute{e}t}},R\e)\ra H^{\le
i}_{\rm{ab}}(S_{\e\rm{fl}},G\e)\ra H^{\le
i+1}(S_{\e\rm{\acute{e}t}},T\e)\ra\dots
$$
and
$$
\dots\ra H^{\le i}(S_{\e\rm{\acute{e}t}},R^{*}\e) \ra H^{\le
i}(S_{\e\rm{\acute{e}t}},T^{*}\e)\ra H^{\le
i}_{\rm{ab}}(S_{\e\rm{\acute{e}t}},G^{*}\e)\ra H^{\le
i+1}(S_{\e\rm{\acute{e}t}},R^{*}\e)\ra\dots.\qed
$$
\end{proposition}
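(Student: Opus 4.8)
The plan is to deduce the whole statement from the derived-category isomorphism \eqref{q-iso}, obtaining the two displayed isomorphisms first and then extracting the two long exact sequences from the tautological truncations of the two-term complexes $T\ra R$ and $R^{*}\ra T^{*}$.

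For the first isomorphism I would simply apply the hypercohomology functor $\bh^{\le i}(S_{\e\rm{fl}},-)$ to \eqref{q-iso}: by definition $H^{\le i}_{\rm ab}(S_{\e\rm{fl}},G)=\bh^{\le i}(S_{\e\rm{fl}}, Z(\Gtil)\overset{\partial_{Z}}\ra Z(G))$, and hypercohomology carries isomorphisms in the derived category to isomorphisms. For the dual isomorphism I would dualize \eqref{q-iso}. The point is that \eqref{q-iso} is realized by a quasi-isomorphism of bounded complexes whose terms are all $S$-group schemes of multiplicative type, so that the contravariant character functor $M\mapsto M^{*}$ may be applied term by term; since this functor is exact, the argument of Lemma \ref{lem:Bernstein} (via the cone criterion of Lemma \ref{l:quasi}) shows that it preserves quasi-isomorphisms. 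Dualizing \eqref{q-iso} therefore produces a canonical isomorphism $(Z(G)^{*}\overset{\partial_{Z}^{*}}\ra Z(\Gtil)^{*})\approx(R^{*}\ra T^{*})$ in the derived category of \'etale sheaves, both complexes being placed in degrees $(-1,0)$; applying $\bh^{\le i}(S_{\e\rm{\acute{e}t}},-)$ yields the second isomorphism.

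For the exact sequences I would use the stupid-filtration short exact sequence of complexes $0\ra R\ra (T\ra R)\ra T[1]\ra 0$, where $T$ sits in degree $-1$ and $R$ in degree $0$. Its hypercohomology long exact sequence reads $\cdots\ra H^{\le i}(S_{\e\rm{fl}},R)\ra \bh^{\le i}(S_{\e\rm{fl}},T\ra R)\ra H^{\le i+1}(S_{\e\rm{fl}},T)\ra\cdots$, the connecting map being induced by $T\ra R$. Substituting the first isomorphism, and using that for the smooth $S$-tori $T$ and $R$ the fppf and \'etale cohomology groups agree, gives the first displayed sequence. The second sequence is obtained identically from $0\ra T^{*}\ra (R^{*}\ra T^{*})\ra R^{*}[1]\ra 0$ together with the dual isomorphism. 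The only delicate point throughout is the dualization of \eqref{q-iso}: one must know that it is represented by an honest quasi-isomorphism of complexes of multiplicative-type groups, and must track the degree conventions so that the dual complex lands in degrees $(-1,0)$, matching the definition of $H^{\le i}_{\rm ab}(S_{\e\rm{\acute{e}t}},G^{*})$; everything else is formal.
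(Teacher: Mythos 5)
Your proposal is correct and follows essentially the same route as the paper, which states the proposition as an immediate consequence of \eqref{q-iso} (as in \cite[Proposition 4.2]{GA-flasque}): you have simply made explicit the details that make it immediate, namely that \eqref{q-iso} is realized by (a zigzag of) quasi-isomorphisms of two-term complexes of multiplicative-type $S$-groups, that the exact functor $M\mapsto M^{*}$ therefore dualizes it to $(Z(G)^{*}\ra Z(\Gtil\,)^{*})\approx(R^{*}\ra T^{*})$ in degrees $(-1,0)$, and that the b\^ete-filtration sequences $0\ra R\ra(T\ra R)\ra T[1]\ra 0$ and $0\ra T^{*}\ra(R^{*}\ra T^{*})\ra R^{*}[1]\ra 0$ yield the two long exact sequences. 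Your observation that the first sequence also requires the agreement of fppf and \'etale cohomology for the smooth groups $T$ and $R$ is exactly the right (and only) extra ingredient.
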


\begin{corollary}\label{cor-pi1} Let $G$ be a reductive $S$-group scheme.
Then, for every integer $i\geq -1$, there exist isomorphisms
$$
H^{\le i}_{\rm{ab}}(S_{\e\rm{fl}}, G \e)\simeq{\bh}^{\e i}
(S_{\e\rm{fl}},\pi_{1}\lbe(G\e)\lbe\otimes^{\e\mathbf{L}}\be
\bg_{m,S})
$$
and
$$
H^{\le i}_{\rm{ab}}(S_{\e\rm{\acute{e}t}},G^{*}\e)\simeq{\bh}^{\e i}
(S_{\e\rm{\acute{e}t}},\rhom(\pi_{1}\lbe(G\e),\bz_{\! S})).
$$
\end{corollary}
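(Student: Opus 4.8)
The plan is to feed the cocharacter sequence attached to a $t$-resolution into the two derived functors and then quote Proposition \ref{prop-cohom}. Fix a $t$-resolution $1\to T\to H\to G\to 1$ of $G$ and put $R=H^{\tor}$. By \eqref{t-r-p1g} the associated sequence of cocharacter group schemes $1\to T_{*}\to R_{*}\to\pi_{1}(G)\to 1$ is exact. Since $T$ and $R$ are $S$-tori, the twisted constant $S$-group schemes $T_{*}$ and $R_{*}$ are free and finitely generated, hence flat; consequently the two-term complex $(T_{*}\to R_{*})$ (placed in degrees $(-1,0)$) is a locally free resolution of $\pi_{1}(G)$ which may be used to compute both $\pi_{1}(G)\otimes^{\e\mathbf{L}}\bg_{m,S}$ and $\rhom(\pi_{1}(G),\bz_{\! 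S})$ term by term. This is the crucial observation: $\pi_{1}(G)$ need not be torsion-free, so the two derived functors genuinely involve both terms of the complex, but the flatness of $T_{*}$ and $R_{*}$ lets us dispense with any further resolution.

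For the first isomorphism I would invoke the canonical identification of an $S$-torus with the tensor product of its cocharacter group scheme and $\bg_{m,S}$, namely $T\simeq T_{*}\otimes\bg_{m,S}$ and $R\simeq R_{*}\otimes\bg_{m,S}$, compatibly with the maps in the cocharacter sequence. Tensoring the resolution with $\bg_{m,S}$ and using the flatness of $T_{*}$ and $R_{*}$ then gives a canonical isomorphism $\pi_{1}(G)\otimes^{\e\mathbf{L}}\bg_{m,S}\simeq(T\to R)$ in the derived category. Taking ${\bh}^{\e i}(S_{\e\rm{fl}},-)$ and comparing with the first isomorphism of Proposition \ref{prop-cohom} yields $H^{\le i}_{\rm{ab}}(S_{\e\rm{fl}},G)\simeq{\bh}^{\e i}(S_{\e\rm{fl}},\pi_{1}(G)\otimes^{\e\mathbf{L}}\bg_{m,S})$.

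For the second isomorphism I would instead apply $\rhom(-,\bz_{\! S})$ to the same resolution. Because $T_{*}$ and $R_{*}$ are locally free, this right derived functor is again computed term by term, and the identification \eqref{char-cochar} gives $\hom_{S\text{-gr}}(R_{*},\bz_{\! S})\simeq R^{*}$ and $\hom_{S\text{-gr}}(T_{*},\bz_{\! S})\simeq T^{*}$. Hence $\rhom(\pi_{1}(G),\bz_{\! S})\simeq(R^{*}\to T^{*})$ in the derived category, and taking ${\bh}^{\e i}(S_{\e\rm{\acute{e}t}},-)$ and comparing with the second isomorphism of Proposition \ref{prop-cohom} gives the asserted isomorphism $H^{\le i}_{\rm{ab}}(S_{\e\rm{\acute{e}t}},G^{*})\simeq{\bh}^{\e i}(S_{\e\rm{\acute{e}t}},\rhom(\pi_{1}(G),\bz_{\! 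S}))$.

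The step needing the most care — and the one I expect to be the main obstacle — is the bookkeeping of degrees: one must verify that the two-term complexes produced by $\otimes^{\e\mathbf{L}}\bg_{m,S}$ and by $\rhom(-,\bz_{\! S})$ occupy precisely the degrees in which the complexes $(T\to R)$ and $(R^{*}\to T^{*})$ of Proposition \ref{prop-cohom} are placed, so that the hypercohomology indices match on the nose; the contravariance of $\rhom$ is exactly the place where a spurious shift could creep in and must be reconciled with the degree convention fixed for the dual abelian cohomology. Granting this, one should finally note that the construction is independent of the chosen $t$-resolution, which follows because the resolution \eqref{t-r-p1g} and hence the resulting derived-category objects are compatible with the comparison isomorphisms of Lemma \ref{lem:can-isom} relating different $t$-resolutions. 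With these verifications the corollary is an immediate consequence of Proposition \ref{prop-cohom}.
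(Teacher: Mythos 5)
Your proof is correct and is essentially the paper's own argument: the paper's proof of this corollary is a one-line reference to \cite{GA-flasque} (Corollary 4.3 there following from Proposition 4.2 there), and that argument is exactly what you spell out --- view \eqref{t-r-p1g} as a locally free resolution of $\pi_1(G)$, compute $\pi_{1}\lbe(G\e)\lbe\otimes^{\e\mathbf{L}}\be\bg_{m,S}\simeq(T\to R\e)$ and $\rhom(\pi_{1}\lbe(G\e),\bz_{\! S})\simeq(R^{*}\to T^{*})$ term by term using flatness of $T_{*}$ and $R_{*}$ together with \eqref{char-cochar}, and compare with Proposition \ref{prop-cohom}. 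The degree bookkeeping you rightly single out is settled by reading the dual complexes (both in the definition of dual abelian cohomology and in Proposition \ref{prop-cohom}) in the degrees produced by applying $\rhom(-,\bg_{m,S})$ to a complex in degrees $(-1,0)$, namely $(0,1)$, which is exactly where your computation of $\rhom(\pi_{1}\lbe(G\e),\bz_{\! S})$ lands, so no shift occurs.
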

\begin{proof} This follows from  Proposition \ref{prop-cohom}  in the same way
as \cite[Corollary 4.3]{GA-flasque} follows from \cite[Proposition 4.2]{GA-flasque}.
\end{proof}

\begin{proposition} Let $1\ra G_{1}\ra G_{2}\ra G_{3}\ra 1$ be an exact sequence of reductive $S$-group schemes.
Then there exist exact sequences of abelian groups
$$
\dots\ra H^{\le i}_{\rm{ab}}(S_{\e\rm{fl}},G_{1}\e)\ra H^{\le
i}_{\rm{ab}}(S_{\e\rm{fl}},G_{2}\e)\ra H^{\le
i}_{\rm{ab}}(S_{\e\rm{fl}}, G_{3}\e)\ra H^{\le
i+1}_{\rm{ab}}(S_{\e\rm{fl}},G_{1}\e)\ra\dots
$$
and
$$
\dots\ra H^{\le i}_{\rm{ab}}(S_{\e\rm{\acute{e}t}},G_{3}^{*}\e)\ra
H^{\le i}_{\rm{ab}}(S_{\e\rm{\acute{e}t}},G_{2}^{*}\e)\ra H^{\le
i}_{\rm{ab}}(S_{\e\rm{\acute{e}t}}, G_{1}^{*}\e)\ra H^{\le
i+1}_{\rm{ab}}(S_{\e\rm{\acute{e}t}},G_{3}^{*}\e)\ra\dots.
$$
\end{proposition}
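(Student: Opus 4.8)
The plan is to derive both long exact sequences from a single short exact sequence of algebraic fundamental groups, provided by Theorem \ref{thm:pi1-exact}, combined with the derived-category descriptions of the abelian cohomology groups in Corollary \ref{cor-pi1}. First I would apply the functor $\pi_1$ to the given sequence $1\to G_1\to G_2\to G_3\to 1$; by Theorem \ref{thm:pi1-exact} this yields a short exact sequence
\[
0\to\pi_1(G_1)\to\pi_1(G_2)\to\pi_1(G_3)\to 0
\]
of finitely generated twisted constant $S$-group schemes. Regarding these group schemes as abelian sheaves for the fppf (respectively, \'etale) topology, I obtain a short exact sequence of such sheaves, hence a distinguished triangle in the corresponding derived category.

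For the first assertion, I would apply the triangulated functor $-\otimes^{\e\mathbf{L}}\be\bg_{m,S}$ to this triangle over $S_{\e\rm{fl}}$, producing a distinguished triangle
\[
\pi_1(G_1)\otimes^{\e\mathbf{L}}\be\bg_{m,S}\to\pi_1(G_2)\otimes^{\e\mathbf{L}}\be\bg_{m,S}\to\pi_1(G_3)\otimes^{\e\mathbf{L}}\be\bg_{m,S}\labelto{+1}.
\]
Passing to the associated long exact hypercohomology sequence $\bh^{\e i}(S_{\e\rm{fl}},-)$ and identifying each term by means of the first isomorphism of Corollary \ref{cor-pi1} then delivers the desired long exact sequence of flat abelian cohomology groups. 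For the second assertion, I would instead apply the contravariant triangulated functor $\rhom(-,\bz_{\! S})$ to the distinguished triangle of $\pi_1$'s over $S_{\e\rm{\acute{e}t}}$; this reverses the arrows and shifts, giving a distinguished triangle
\[
\rhom(\pi_1(G_3),\bz_{\! S})\to\rhom(\pi_1(G_2),\bz_{\! S})\to\rhom(\pi_1(G_1),\bz_{\! S})\labelto{+1},
\]
whose long exact hypercohomology sequence $\bh^{\e i}(S_{\e\rm{\acute{e}t}},-)$, identified via the second isomorphism of Corollary \ref{cor-pi1}, is exactly the asserted sequence of dual abelian cohomology groups (the reversal of the roles of $G_1$ and $G_3$ is precisely the contravariance of $\rhom(-,\bz_{\! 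S})$).

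The main obstacle I anticipate is not the formal construction of the triangles, since $-\otimes^{\e\mathbf{L}}\be\bg_{m,S}$ and $\rhom(-,\bz_{\! S})$ are automatically exact functors of triangulated categories, but rather the verification that the identifications of Corollary \ref{cor-pi1} are \emph{natural} in $G$, so that the three identified terms in each triangle are indeed linked by the maps induced by $G_1\to G_2\to G_3$. To settle this I would recall that those isomorphisms arise, through Proposition \ref{prop-cohom} and the quasi-isomorphism \eqref{q-iso}, from a choice of $t$-resolution; invoking Lemma \ref{lem:resol-of-morph} to resolve the morphisms $G_1\to G_2$ and $G_2\to G_3$ compatibly (as in the Construction following Lemma \ref{lem:dom-of-morph}) shows that these isomorphisms assemble into a morphism of distinguished triangles, which is exactly what is required for the hypercohomology long exact sequences to take the stated form.
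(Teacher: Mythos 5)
Your proof is correct and follows essentially the same route as the paper: the paper's entire proof reads ``This follows from Corollary \ref{cor-pi1} and Theorem \ref{thm:pi1-exact},'' and your proposal spells out precisely that derivation, passing from the short exact sequence of fundamental groups to distinguished triangles under $-\otimes^{\e\mathbf{L}}\be\bg_{m,S}$ and $\rhom(-,\bz_{\! S})$ and then taking hypercohomology. Your additional care about the naturality in $G$ of the identifications in Corollary \ref{cor-pi1}, settled via compatible $t$-resolutions of the morphisms, addresses a point the paper leaves implicit and is handled correctly.
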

\begin{proof} This follows from  Corollary \ref{cor-pi1} and Theorem \ref{thm:pi1-exact}.
\end{proof}

\bigskip\noindent
{\bf Acknowledgements.}
M.~Borovoi was partially supported by the Hermann Minkowski Center for Geometry.
C.D.~Gonz\'alez-Avil\'es was partially supported by Fondecyt grant 1120003.
The authors are very grateful to Brian Conrad for proving \cite[Proposition B.3.8]{cnrd},
which we used in the proof of Proposition \ref{ex-t-resol} 
and in a construction in Remark \ref{rem:m-res}.
We are  grateful to Joseph Bernstein for his help in proving Lemma \ref{lem:Bernstein},
and to Jean-Louis Colliot-Th\'el\`ene for most helpful discussions.
We thank the anonymous referees for their helpful remarks.
This paper was completed during a stay of both authors at the Max-Planck-Institut f\"ur Mathematik, Bonn,
and we are very grateful to this institute for hospitality, support and excellent working conditions.

\end{document}